\documentclass[a4paper,12pt,final, reqno]{amsart}
\usepackage{times, a4wide,mathrsfs,amssymb,dsfont, enumerate, xypic}
\usepackage{hyperref}
\usepackage[usenames,dvipsnames]{xcolor}
\hypersetup{colorlinks=true,citecolor=NavyBlue,linkcolor=Brown,urlcolor=Orange}

\newcommand{\C}{\mathbb{C}}
\newcommand{\Z}{\mathbb{Z}}

\newcommand{\QQ}{\mathbb{Q}}

\newcommand{\PP}{\mathbb{P}}
\newcommand{\LL}{\mathcal{L}}

\newcommand{\OO}{\mathcal O}

\newcommand{\XX}{\mathcal X}

\newcommand{\FF}{\mathcal F}
\newcommand{\TT}{\mathcal T}
\newcommand{\CH}{\operatorname{CH}}

\newcommand{\YY}{\mathcal Y}
\newcommand{\PPP}{\mathcal P}
\newcommand{\ZZZ}{\mathcal Z}

\newcommand{\pic}{\hbox{Pic}}

\newcommand{\ima}{\operatorname{Im}}
\newcommand{\rom}{\romannumeral}

\newcommand{\ide}{\mathrm{id}}
\newcommand{\aut}{\hbox{Aut}}
\newcommand{\bir}{\hbox{Bir}}

\newif\ifHideFoot
\HideFoottrue  
\HideFootfalse  

\ifHideFoot

\newcommand{\Robert}[1]{}
\newcommand{\Charles}[1]{}

\else 

\newcommand{\marg}[1]{\normalsize{{
			\color{red}\footnote{{\color{blue}#1}}}{\marginpar[\vskip
			-.25cm{\color{red}\hfill$\Rightarrow$\tiny\thefootnote}]{\vskip
				-.2cm{\color{red}$\Leftarrow$\tiny\thefootnote}}}}}
\newcommand{\Robert}[1]{\marg{(Robert) #1}}
\newcommand{\Charles}[1]{\marg{(Charles) #1}}

\fi

\newtheorem{theorem}{Theorem}[section]
\newtheorem{claim}[theorem]{Claim}
\newtheorem{lemma}[theorem]{Lemma}
\newtheorem{corollary}[theorem]{Corollary}
\newtheorem{proposition}[theorem]{Proposition}

\newtheorem{thm2}{Theorem}
\newtheorem{conj2}{Conjecture}

\theoremstyle{definition}

\newtheorem{convention}{Conventions}

\newtheorem{remark}[theorem]{Remark}
\newtheorem{definition}[theorem]{Definition}

\newtheorem{nonumberingt}{Acknowledgements}

\begin{document}

	\author[Robert Laterveer]
	{Robert Laterveer}
	\address{Institut de Recherche Math\'ematique Avanc\'ee,
		CNRS -- Universit\'e 
		de Strasbourg,\
		7 Rue Ren\'e Des\-car\-tes, 67084 Strasbourg CEDEX,
		FRANCE.}
	\email{robert.laterveer@math.unistra.fr}

	\author[Charles Vial]
	{Charles Vial}
	\address{Universit\"at Bielefeld, Germany}
	\email{vial@math.uni-bielefeld.de}


	\title{Zero-cycles on double EPW sextics}	
	
	\begin{abstract} The Chow rings of hyperK\"ahler varieties are conjectured to
		have a particularly rich structure. In this paper, we focus on the locally
		complete family of double EPW sextics and establish some properties of their
		Chow rings.  
		First we prove a Beauville--Voisin type theorem for zero-cycles on double EPW
		sextics\,; precisely, we show that the codimension-4 part of the subring of the
		Chow ring of a double EPW sextic generated by divisors, the Chern classes and
		codimension-2 cycles invariant under the	anti-symplectic covering involution has
		rank one. Second, for double EPW sextics  birational to the Hilbert square of a
		K3 surface, we show that
		the action of the anti-symplectic involution on the Chow group of zero-cycles
		commutes with the Fourier decomposition of Shen--Vial. 
	\end{abstract}

	\keywords{Algebraic cycles, Chow groups, motives, hyperK\"ahler varieties, double EPW sextics,
		anti-symplectic involution, Bloch conjecture, Beauville ``splitting property''
		conjecture.}
	
	\subjclass[2010]{14C15, 14C25, 14J28, 14J42.}

	\maketitle
	
	\section*{Introduction}
	 
	Since the seminal work of Beauville and Voisin on the Chow ring of K3 surfaces
	\cite{BV}, it has been observed that the Chow rings (and more generally the Chow
	motives, considered as algebra objects) of hyperK\"ahler varieties possess a surprisingly rich structure, which
	seems to parallel that of abelian varieties. Our aim is to study aspects
	of the Chow ring, which conjecturally should hold for all hyperK\"ahler
	varieties, in the special case of double EPW sextics.
	Discovered by O'Grady \cite{OG}, double EPW sextics form a 20-dimensional
	locally complete family of hyperK\"ahler fourfolds, deformation equivalent to
	the Hilbert square of a K3 surface.
	\medskip

	For a scheme $X$ of finite type over a field, we denote
	$\CH^i(X)$
	the Chow group of codimension-$i$ cycle classes with rational coefficients
	(\emph{i.e.} the group of codimension-$i$ algebraic cycles on
	$X$ with $\QQ$-coefficients, modulo rational equivalence).

	\subsection{The Beauville--Voisin conjecture}
	
	\begin{conj2}[Beauville--Voisin]
		Let $X$ be a hyperK\"ahler variety.  Consider the $\QQ$-subalgebra
		\[ \operatorname{R}^\ast(X):= \langle \CH^1(X),  c_j(X)\rangle\ \ \ \subset\
		\CH^\ast(X)\
		\]
		generated by divisors and
		Chern classes.
		Then the restriction of the cycle class map $\operatorname{R}^i(X)\to
		H^{2i}(X,\QQ)$ is injective
		for all $i$.
	\end{conj2}
	
	The conjecture was first proven (without being stated as such) in the case of K3
	surfaces in the seminal work of Beauville and Voisin \cite{BV}.
	The conjecture was then formulated by Beauville \cite{Beau3} without the Chern
	classes as an explicit workable consequence of a deeper conjecture stipulating
	the splitting of the conjectural Bloch--Beilinson filtration on the Chow rings
	of hyperK\"ahler varieties. It was then stated in this form by Voisin \cite{V17}
	who established it in the case of Hilbert schemes of points on K3 surfaces of
	low dimension and in the case of Fano varieties of lines on smooth cubic
	fourfolds. The Beauville--Voisin conjecture has by now been established in many
	cases, including the generic double EPW sextic \cite{Fe}, generalized Kummer
	varieties \cite{FuKummer} and, without the Chern classes, hyperK\"ahler
	varieties carrying a rational Lagrangian fibration \cite{Riess} as well as Hilbert schemes of points on K3 surfaces \cite{MN}. 
	There are however, so far, only two examples of locally complete families of
	hyperK\"ahler varieties all of whose members are known to satisfy the
	Beauville--Voisin conjecture, namely K3 surfaces \cite{BV} and Fano varieties of
	lines on cubic fourfolds \cite{V17}. The following result establishes in
	particular the Beauville--Voisin conjecture for zero-cycles for double EPW
	sextics, which form a locally complete family of hyperK\"ahler varieties.
	
	\begin{thm2}\label{thm:ferretti}
		Let $X$ be a double EPW sextic, and let $\iota$ be its anti-symplectic
		involution. Consider the $\QQ$-subalgebra
		\[ \operatorname{R}^\ast(X):= \langle \CH^1(X), \CH^2(X)^+, c_j(X)\rangle\ \ \
		\subset\ \CH^\ast(X)\
		\]
		generated by divisors, $\iota$-invariant codimension-2 cycles and
		Chern classes.
		The restriction of the cycle class map $\operatorname{R}^i(X)\to
		H^{2i}(X,\QQ)$ is injective for
		$i = 4$.
	\end{thm2}
	
	This builds on and extends the main result of Ferretti \cite{Fe}.
	The reason for including $ \CH^2(X)^+$ is motivated by Beauville's splitting
	property conjecture for the Bloch--Beilinson filtration \cite{Beau3}, which in fact suggests that all of $\operatorname{R}^*(X)$ should inject in cohomology.
	Our new input consists in extending a result of Voisin concerning zero-cycles on
	generic Calabi--Yau hypersurfaces to the case of Calabi--Yau hypersurfaces with
	quotient singularities (the EPW sextics are such Calabi--Yau hypersurfaces)\,;
	this is Theorem~\ref{cy}. Unfortunately, we failed to establish the
	Beauville--Voisin conjecture for codimension-3 cycles\,; we identify in
	\eqref{eq:missing} the missing relations. \medskip

	\subsection{Anti-symplectic involutions and zero-cycles}
	In the same way that the action of homomorphisms of abelian varieties preserves
	the Beauville decomposition \cite{BeauAb} of the Chow groups (such a
	decomposition provides a splitting of the conjectural Bloch--Beilinson
	filtration), it is conceivable to expect that morphisms (or even rational maps)
	between hyperK\"ahler varieties preserve the conjectural splitting of the
	conjectural Bloch--Beilinson filtration. 
	Candidates for such a splitting were constructed for Hilbert schemes of K3
	surfaces \cite{SV, V6}, generalized Kummer varieties \cite{FTV} and Fano
	varieties of lines on cubic fourfolds \cite{SV}.
	This expectation was verified for
	the action of Voisin's rational self-map on the Fano variety of lines on a
	cubic fourfold in \cite[Proposition~21.14]{SV}, and for the action of
	finite-order symplectic automorphisms on zero-cycles of generalized Kummer
	varieties \cite[Theorem~5]{Via}. We also note that any rational map between K3
	surfaces is compatible with the splitting of the Bloch--Beilinson filtration
	given by $\CH^2(S) = \QQ[o] \oplus \CH^2_{hom}(S)$, where $o$ denotes the
	Beauville--Voisin \cite{BV} zero-cycle on $S$\,; indeed $o$ is the class of any
	point lying on a rational curve of $S$ and hence it is sent to the class of a
	point lying on a rational curve by the action of any rational map.
	We provide more evidence for this expectation by determining the action of the
	anti-symplectic involution attached to a double EPW sextic birational to the
	Hilbert square of a K3 surface, and also by determining the action of a
	birational automorphism of the Hilbert square of a very general K3 surface.
	
	Precisely,
	we show in Theorem~\ref{main} that $\iota^*$ commutes with the Fourier
	decomposition of $\CH^4(S^{[2]})$ constructed in \cite{SV}, which provides an
	explicit candidate for the splitting of the Bloch--Beilinson conjecture as
	conjectured by Beauville \cite{Beau3}. 
	As a consequence, we describe explicitly the action of $\iota$ on the Chow
	group
	of zero-cycles in case
	$X$ is birational to a Hilbert square $S^{[2]}$ with $S$ a K3 surface (which
	happens on a dense, countable union of divisors in the moduli space of double
	EPW sextics)\,:
	
	\begin{thm2}\label{main0}
		Let $X$ be a smooth double EPW sextic, and assume that $X$ is birational to a
		Hilbert square $S^{[2]}$ with $S$ a K3 surface. Let $\iota\in\aut(X)$ be the
		anti-symplectic involution coming from the double cover $f:X\to Y$, where
		$Y\subset\PP^5$ is an EPW sextic. Then the action of $\iota$ on
		$\CH^4(S^{[2]})$
		is given by 
		$$\iota^*[x,y] = [x,y] - 2[x,o] - 2[y,o] + 4[o,o],$$
		where $o\in S$ denotes any point lying on a rational curve in $S$ and where $[x,y]$ 
		denotes the class in $\CH^4(S^{[2]})$ of any point in $S^{[2]}$ with support 
		$x+y \in S^{(2)} = S^2/\mathfrak{S}_2$ (see \S \ref{secmck}).
	\end{thm2}

	Thanks to work of Boissi\`ere \emph{et alii} \cite{BC} and Debarre--Macr\`i \cite{DM},
	Theorem~\ref{main0} implies (and in fact by Remark~\ref{R:eq} is equivalent to)
	the following statement\,:
	
	\begin{thm2}[Corollary~\ref{cor0}] 
		Let $X$ be a Hilbert scheme
		$X=S^{[2]}$ where $S$ is a K3 surface with $\pic(S)=\Z$.
		Let $\iota\in\bir(X)$
		be a non-trivial birational automorphism. Then ($\iota$ is a non-symplectic
		birational involution, and) $\iota$ acts on $\CH^4(X)$ as in
		Theorem~\ref{main0}.
	\end{thm2}
	
	In \S \ref{S:applications}, we provide two applications to Theorem~\ref{main0}\,:
	in Corollary ~\ref{cor} we extend Theorem~\ref{thm:ferretti} to codimension-3
	cycles when $X$ is birational to the Hilbert square of a K3 surface, while in
	Corollary~\ref{cor2} we show that  the canonical zero-cycle can be characterized
	as the class of any point lying on a uniruled divisor whose class is
	$\iota$-invariant. \medskip
	
	There are three other explicit families of hyperK\"ahler varieties
	such that all members have an anti-symplectic involution\,: the double EPW
	quartics of
	\cite{IKKR}, the double EPW cubes of \cite{IKKR0} and the eightfolds of
	\cite{LLSS}. It would be interesting to try the argument of the present note
	for those hyperK\"ahlers that are in addition birational to a Hilbert scheme
	of a K3 surface.
	
	\begin{convention} 
		In this note, the word {\sl variety\/} will refer to an integral scheme of finite type over $\C$. 
		By \emph{quotient variety}, we will mean a finite quotient of a smooth variety. 
		For a variety $X$,
		$\CH_j(X)$ will denote the Chow group of $j$-dimensional algebraic cycles on
		$X$ 
		with $\QQ$-coefficients.
		For $X$ smooth of dimension $n$ the notations $\CH_j(X)$ and $\CH^{n-j}(X)$ will
		be
		used interchangeably. 
		We will write $H^j(X)$ to indicate singular cohomology $H^j(X(\C),\QQ)$.
		The notation
		$\CH^j_{hom}(X)$ will be used to indicate the subgroups of 
		homologically trivial cycles.
		We denote $V=V^+\oplus V^-$ the eigenspace decomposition of an involution
		acting on a vector space~$V$. 
	\end{convention}

	\section{Zero-cycles on Calabi--Yau hypersurfaces with quotient singularities}
	
	Our approach to proving Theorem~\ref{thm:ferretti}  will involve determining
	the
	subgroup of $\CH^4(X)$ that is generated by the intersection of
	$\iota$-invariant
	cycles on $X$ of positive codimension. Such cycles are pull-backs of the
	intersection of cycles of positive codimension on the EPW sextic, which is a
	Calabi--Yau hypersurface that is a quotient variety. We observe that Voisin's
	\cite[Theorem 3.4]{V13} can be generalized to the case of Calabi--Yau 
	hypersurfaces that are quotient varieties. First we recall that intersection theory on smooth varieties extends to quotient varieties if one is ready to work with rational coefficients\,:
	
	\begin{lemma}\label{quotient} Let $M$ be a {\em quotient variety\/}, \emph{i.e.}
		$M=M^\prime/G$ where $M^\prime$ is a smooth quasi-projective variety and
		$G\subset\aut(M^\prime)$ is a finite group.
		Then $\CH^\ast(M):=\oplus_i \CH^i(M):=\oplus_i\CH_{\dim M-i}(M)$ is a commutative
		graded ring, with the usual functorial properties.
	\end{lemma}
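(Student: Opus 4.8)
The plan is to identify $\CH^\ast(M)_\QQ$ with the ring of $G$-invariants in the intersection ring of the smooth cover $M'$, and to transport the ring structure through this identification. Write $\pi\colon M'\to M=M'/G$ for the quotient morphism; it is finite and surjective of degree $|G|=[\,\C(M')\colon\C(M)\,]$, and it preserves dimension, so under the convention $\CH^i(M)=\CH_{\dim M-i}(M)$ the pushforward $\pi_\ast$ is compatible with the grading by codimension. First I would record that $\pi_\ast\colon\CH_\ast(M')_\QQ\to\CH_\ast(M)_\QQ$ is surjective: any subvariety $V\subset M$ is the image of a component $V'$ of $\pi^{-1}(V)$ with $\pi_\ast[V']=[\C(V')\colon\C(V)]\,[V]$, and the degree is a nonzero rational number. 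Since $\pi\circ g=\pi$ for every $g\in G$, the map $\pi_\ast$ is $G$-invariant, hence factors through the coinvariants, and over $\QQ$ the averaging idempotent $\tfrac1{|G|}\sum_{g\in G}g_\ast$ identifies the coinvariants with the invariants $\CH_\ast(M')^G_\QQ$. The standard computation that $\ker\pi_\ast$ is spanned by the classes $\alpha-g_\ast\alpha$ (Fulton, \emph{Intersection Theory}, Example~1.7.6) then shows that $\pi_\ast$ restricts to an isomorphism $\CH_\ast(M')^G_\QQ\xrightarrow{\ \sim\ }\CH_\ast(M)_\QQ$.

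Next I would put the ring structure on $\CH^\ast(M)$. As $M'$ is smooth, $\CH^\ast(M')_\QQ$ is a commutative graded ring under the intersection product, and since $G$ acts by automorphisms of $M'$ each pullback $g^\ast$ is a graded ring automorphism; hence the invariants $\CH^\ast(M')^G_\QQ$ form a commutative graded subring, with unit $[M']$. Setting $\pi^\ast:=|G|\cdot\bigl(\pi_\ast|_{\CH^\ast(M')^G}\bigr)^{-1}$ gives a graded isomorphism $\pi^\ast\colon\CH^\ast(M)_\QQ\xrightarrow{\ \sim\ }\CH^\ast(M')^G_\QQ$ with $\pi_\ast\pi^\ast=|G|\cdot\mathrm{id}$ and $\pi^\ast[M]=[M']$. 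I then define the product on $\CH^\ast(M)$ by $a\cdot b:=(\pi^\ast)^{-1}\!\bigl(\pi^\ast a\cdot_{M'}\pi^\ast b\bigr)$; this is well defined because $\pi^\ast a\cdot_{M'}\pi^\ast b$ is again $G$-invariant, and it inherits commutativity, associativity and compatibility with the grading from $\CH^\ast(M')$, with unit $[M]$ since $\pi^\ast[M]=[M']$.

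Finally I would address the functorial properties. Proper pushforward along a morphism of quotient varieties is the usual $\CH_\ast$-pushforward, and the projection formula together with pullback along flat or lci morphisms is obtained by choosing $G$-equivariant lifts to the smooth covers and invoking the corresponding identities on $M'$, then descending to invariants. The delicate point—and the one I expect to be the main obstacle—is to check that the ring structure does not depend on the chosen presentation $M=M'/G$, and that pullback can be defined for an \emph{arbitrary} morphism of quotient varieties, since such a morphism need not lift to an equivariant morphism of the chosen covers. I would handle this by comparing two presentations through a normalized fibre product dominating both and using the projection formula, or, equivalently, by matching the transported product with the intrinsic intersection product on quotient varieties supplied by the established intersection theory for quotients (Vistoli; Gillet; Kimura); this is where one appeals to general machinery rather than a bare-hands argument.
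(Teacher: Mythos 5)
Your proposal is correct in substance, but it takes a genuinely different route from the paper. The paper's entire proof is an appeal to general machinery: by \cite[Example 17.4.10]{F}, for a quotient variety $M$ the natural map from operational Chow cohomology $\CH^i(M)_{\QQ}$ to $\CH_{\dim M - i}(M)_{\QQ}$ (cap product with $[M]$) is an isomorphism, and the ring structure together with all functorial properties --- in particular contravariant functoriality along \emph{arbitrary} morphisms of quotient varieties --- is then inherited from the formal package of the operational theory. You instead build the product by hand: identify $\CH_\ast(M)_\QQ$ with the invariants $\CH_\ast(M')^G_\QQ$ via $\pi_\ast$ (using \cite[Example 1.7.6]{F}) and transport the intersection product from the smooth cover. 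That descent computation is sound (your verification that $\pi_\ast$ is injective on invariants via the averaging idempotent is exactly right, and it is in fact the key ingredient in Fulton's own proof of Example 17.4.10), and it buys something the operational citation keeps hidden, namely the explicit formula $a\cdot b=\tfrac{1}{|G|}\,\pi_\ast\bigl(\pi^\ast a\cdot_{M'}\pi^\ast b\bigr)$. What it loses is exactly what you flag at the end: independence of the presentation $M=M'/G$ and pullback along arbitrary morphisms (which need not lift equivariantly; note also that the fibre product of two smooth covers over $M$ is in general not smooth, so the ``dominating cover'' comparison is not a routine check). Your fallback for these points --- matching your product against the intrinsic one supplied by Vistoli/Gillet/Kimura or by the operational theory --- is legitimate, but be aware that it is not an incidental loose end: it is precisely the content of the citation that constitutes the paper's whole proof. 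So the two approaches are complementary rather than in conflict, and your proposal becomes complete once that final appeal is made precise, e.g.\ by quoting \cite[Example 17.4.10]{F} and using the projection formula $\pi_\ast(\pi^\ast c\cap [M'])=|G|\,\bigl(c\cap [M]\bigr)$ to identify your transported product with the operational one.
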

	
	\begin{proof} According to \cite[Example 17.4.10]{F}, the natural map
		\[ \CH^i(M)\ \to\ \CH_{\dim M-i}(M) \]
		from operational Chow cohomology (with $\QQ$-coefficients) to the usual Chow
		groups (with $\QQ$-coefficients) is an isomorphism. The lemma follows from the
		good formal properties of operational Chow cohomology.
	\end{proof}

	\begin{theorem}\label{cy} 
		Let $Y\subset\PP^{n+1}(\C)$ be a hypersurface
		of degree $n+2$, and assume $Y$ is a quotient variety. Then the image
		of
		the intersection product map
		\[ \CH^i(Y)\otimes \CH^{n-i}(Y)\ \to\ \CH^n(Y),\ \ \ 0<i<n,\]
		is one-dimensional.
	\end{theorem}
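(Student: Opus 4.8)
The plan is to run Voisin's geometric argument for general smooth Calabi--Yau hypersurfaces \cite[Theorem 3.4]{V13} and to verify that each step survives the presence of the quotient singularities of $Y$. First I would record that $Y$ has trivial canonical class: since $\deg Y = n+2$, adjunction gives $K_Y = (K_{\PP^{n+1}} + Y)|_Y = (-(n+2)h + (n+2)h)|_Y = 0$, so $Y$ is Calabi--Yau. By Lemma~\ref{quotient} the group $\CH^\ast(Y)$ is a commutative graded ring with pullbacks and a well-defined intersection product, so the maps in the statement make sense. Writing $h = c_1(\OO_Y(1)) \in \CH^1(Y)$, the class $h^n \in \CH^n(Y)$ is nonzero (it has degree $\deg Y = n+2$) and lies in the image, being equal to $h^i \cdot h^{n-i}$. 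Hence it suffices to prove that every product $Z \cdot W$ with $Z \in \CH^i(Y)$, $W \in \CH^{n-i}(Y)$ and $0 < i < n$ lies in the line $\QQ\, h^n$.

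The geometric engine I would use is the family of lines in the smooth ambient $\PP^{n+1}$. Two facts drive the argument: (i) any line $\ell \subset \PP^{n+1}$ meets $Y$ in a zero-cycle whose class is $h^n$, independently of $\ell$, because the lines are parametrised by a rationally connected variety and a cycle moving in an algebraic family over a rationally connected base has constant class in $\CH_\ast(Y)$; and (ii) coning a subvariety of $Y$ from a variable vertex $x \in \PP^{n+1}$, combined with residuation in the sections $\ell \cap Y$, produces rational equivalences on $Y$ relating $Z \cdot W$ to cycles supported on such line-sections. The Calabi--Yau numerology $\deg Y = n+2$ is exactly what makes the relevant cone and secant loci land in the expected dimension, so that these residual-intersection identities close up and force $Z \cdot W$ to be a multiple of $h^n$; this is where the hypothesis $0 < i < n$, putting both factors in positive dimension and positive codimension, is used. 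I emphasise that all of these constructions take place in $\PP^{n+1}$ and only the resulting cycle relations are read on $Y$, while the family-constancy principle in (i) is valid for cycles on an arbitrary, possibly singular, target.

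The hard part will not be the geometry but the compatibility of these manipulations with $\operatorname{Sing}(Y)$. I would handle this by working throughout with $\QQ$-coefficients and the operational intersection theory of Lemma~\ref{quotient}: refined intersections with cones, pullbacks along the projections defining the cones, and pushforwards are all available, and where a smooth model is needed one transports cycles to the smooth cover $Y'$ (with $Y = Y'/G$) via $\pi_\ast \pi^\ast = |G|\cdot \ide$ in order to descend equivalences established upstairs. One must check that general lines and cones meet $Y$, and the given cycles $Z$ and $W$, away from or transversally to the singular locus, so that the refined products agree with the naive ones; with $\QQ$-coefficients and Lemma~\ref{quotient} I expect this to be routine. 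The genuinely delicate case is the middle-dimensional one (for $Y$ of even dimension and $i = n/2$, e.g.\ $i = 2$ for the sextic fourfolds relevant to EPW sextics), where the cohomology class of $Z$ or $W$ need not be a power of $h$; there the full strength of the Calabi--Yau condition must be invoked to show that even the primitive contributions multiply into $\QQ\, h^n$.
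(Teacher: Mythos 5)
Your proposal misplaces where the genericity hypothesis enters Voisin's proof, and the resulting hole is a genuine gap, not a technicality. Voisin's argument in \cite[Theorem~3.1]{V13} does not consist of geometric constructions performed on an individual hypersurface whose steps one could check to ``survive'' quotient singularities: it produces \emph{relative} cycles $o$, $R$, $\Gamma$ over the open locus $B^\circ$ of the parameter space $B$ of all degree $n+2$ hypersurfaces, realizing a decomposition of the small diagonal $\delta_{Y_b}$ fiberwise, and its proof needs both the smoothness of $Y_b$ and the fact that the Fano variety of lines has the expected dimension $n-3$. On a singular quotient-variety $Y$ the second condition can honestly fail --- the fiber dimension of the relative family of lines $\LL\to B$ jumps outside $B^\circ$ --- and neither operational Chow theory nor your covering trick $\pi_*\pi^*=\vert G\vert\cdot\ide$ can restore it (the smooth cover of $Y$ is not a hypersurface, so none of the line geometry lifts to it). Moreover, the actual engine of Voisin's proof is the small-diagonal decomposition used as a correspondence acting on $\beta\times\gamma$; your step (ii) is a vague stand-in for it, and your own write-up concedes the hole: for the middle-dimensional, primitive classes you say ``the full strength of the Calabi--Yau condition must be invoked'' with no argument, yet that case is precisely the content of the theorem (for products involving powers of $h$ the statement is easy, cf.\ Remark~\ref{rmk:h}).

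What is needed, and what the paper does, is a \emph{specialization} argument rather than a re-run of the geometry: choose closures $\bar R,\bar\Gamma$ over all of $B$ of Voisin's relative cycles, and use the Hilbert-scheme argument of \cite[Proposition~2.4]{V10} to extend the small-diagonal decomposition to \emph{every} fiber, with $\bar R\vert_b,\bar\Gamma\vert_b$ interpreted as Gysin fibers. On a quotient-variety fiber the identity $\beta\cdot\gamma=(\delta_{Y_b})_*(\beta\times\gamma)$ still makes sense with $\QQ$-coefficients (Lemma~\ref{quotient}); the action of $\bar R\vert_b$ factors through $\CH^n(\PP^{n+1})$; and $(\bar\Gamma\vert_b)_*(\beta\times\gamma)$ is supported on a finite union of lines in $Y_b$ \emph{even when} the family of lines has excess dimension, because excess dimension only affects the support, not the existence, of the Gysin fiber. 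One then concludes since any $0$-cycle on a line in $Y_b$ is proportional to $h^n$, a consequence of Lemma~\ref{hyp} valid for quotient varieties. None of the ingredients you list --- constancy of $\ell\cap Y$ in families, coning and residuation, transversality away from $\operatorname{Sing}(Y)$ --- substitutes for this specialization step, which is the missing idea in your proposal.
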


	\begin{proof}[Proof of Theorem~\ref{cy}]
	In the case $Y$ is a {\em
			general\/} hypersurface, this is due to Voisin \cite[\S 3]{V13} (this was extended to general
		Calabi--Yau complete intersections by L. Fu \cite{LFu}). The genericity
		assumption is only made in order to ensure that $Y$ is smooth and the Fano variety $F(Y)$ of
		lines in $Y$ is of the expected dimension (which is $n-3$). Let us
		check that
		Voisin's argument extends to all Calabi--Yau hypersurfaces that are
		quotient varieties.

First we introduce some notation. If $\XX \to B$ is a complex morphism to a smooth complex variety $B$, we denote $Z_b$ the fiber over $b\in B(\C)$ of the subscheme $\ZZZ \subseteq \XX$, while we denote $\Gamma\vert_b$ the Gysin fiber~\cite{F} in $\CH_{\ast}(X_b)$ of the cycle class $\Gamma \in \CH_{\ast}(\XX)$.
		
		Let now $B = \PP H^0(\PP_\C^{n+1},\mathcal O(n+2))$ be the space parameterizing  degree $n+2$ hypersurfaces in
		$\PP^{n+1}_\C$ and let $\YY\to B$ be the corresponding universal family, \emph{i.e.} $\YY := \{(f,x) : f(x) = 0\} \subseteq B\times \PP_\C^{n+1}$.
	Let $B^\circ \subset B$ be the non-empty
	open subset parameterizing  smooth hypersurfaces the Fano varieties of which have
	 dimension $n-3$. Voisin's argument in the proof of  \cite[Theorem~3.1]{V13}
	 provides relative cycle classes $o\in \CH^n(\YY_{B^\circ})$ and $R, \Gamma \in \CH^{2n}(\YY\times_{B^\circ}\YY\times_{B^\circ} \YY)$ such that 
		\begin{equation}\label{decomp} 
		\delta_{Y_b} = p_{12}^*\Delta_{Y_b} \cdot p_3^*o\vert_b + p_{13}^*\Delta_{Y_b} \cdot p_2^*o\vert_b + p_{23}^*\Delta_{Y_b} \cdot p_1^*o\vert_b +  R\vert_b+\Gamma\vert_b\ \ \ \hbox{in}\ \CH^{2n}(Y_b\times Y_b\times
		Y_b)
		\end{equation}
		for all $b\in B^\circ$ with the following properties\,: $o\vert_b = \frac{1}{n+2}\, h^n$ with $h$ the hyperplane class on $Y_b$, $\Delta_{Y_b}$ is the diagonal class in $\CH^n(Y_b\times Y_b)$, 	$\delta_{Y_b} := p_{12}^*\Delta_{Y_b} \cdot p_{23}^*\Delta_{Y_b}$ is the so-called small diagonal, the restriction $R\vert_b$ of $R$ at $b\in B^\circ$ is some cycle in the
		image of the restriction map 
		\[ \CH^{2n}\bigl(\PP^{n+1}\times \PP^{n+1} \times \PP^{n+1}\bigr)\to
		\CH^{2n}\bigl(Y_b\times Y_b\times
		Y_b\bigr),\]
		and $\Gamma$ is a multiple of the cycle class attached to
	\[  \ima\bigl(  \LL\times_{B^\circ} \LL\times_{B^\circ} \LL\ \to\
			\YY\times_{B^\circ} \YY\times_{B^\circ} \YY\bigr)\ \ \ \subset\    \YY\times_{B^\circ}
			\YY\times_{B^\circ} \YY\ ,\]
			where $\LL \rightarrow \YY$ is the ``relative universal line''
		over $B$. Here, $p_i : Y_b\times Y_b\times Y_b \to Y_b$ is the projection on the $i$-th factor and $p_{ij} : Y_b\times Y_b\times Y_b \to Y_b\times Y_b$ is the projection on the product of the $i$-th and $j$-th factors.
		This decomposition implies Theorem~\ref{cy} for $Y_b$ with $b\in B^\circ$ (which is \cite[Theorem~3.4]{V13}). Indeed,
		for any $\beta\in    \CH^i(Y_b)$ and any $\gamma\in    \CH^{n-i}(Y_b)$ with $0<i<n$ one
		has
		\[ \beta\cdot \gamma = (\delta_{Y_b})_\ast (\beta\times \gamma) = \deg(\beta\cdot \gamma) o\vert_b +
		(R\vert_b+\Gamma\vert_b)_\ast(\beta\times \gamma)\ \ \ \hbox{in}\ \CH^n(Y_b),\]
		where the small diagonal $\delta_{Y_b}$ is considered as a correspondence from $Y_b\times Y_b$ to $Y_b$,
		and one can check that the right-hand side is proportional to $h^n\in
		\CH^n(Y_b)$.
		
		Let now
		$\bar{R},\bar{\Gamma} \in \CH_{\ast}(\YY\times_{B}\YY\times_{B}\YY)$ be relative cycles over $B$
		restricting over $B^\circ$ to 
		$R$ and~$\Gamma$, respectively. (Here we use the $\CH_\ast(-)$ notation, since
		$\YY\to B$ is not a smooth
		morphism and so the fiber product $\YY\times_{B}\YY\times_{B}\YY$ may not be
		smooth and contain components of various dimensions.) A standard Hilbert scheme argument \cite[Proposition
		2.4]{V10}
		implies that the decomposition~\eqref{decomp} extends
		to
		the whole parameter space $B$, in the sense that 
		\begin{equation*}
		\delta_{Y_b}= p_{12}^*\Delta_{Y_b} \cdot p_3^*o\vert_b + p_{13}^*\Delta_{Y_b} \cdot p_2^*o\vert_b + p_{23}^*\Delta_{Y_b} \cdot p_1^*o\vert_b +  \bar{R}\vert_b+\bar{\Gamma}\vert_b\ \ \ \hbox{in}\
		\CH_{n}(Y_b\times Y_b\times Y_b)
		\end{equation*}
	for\ any $b\in B$.
		
		Since the formalism of correspondences with $\QQ$-coefficients goes through
		unchanged for quotient varieties, the equality $\beta\cdot
		\gamma=(\delta_{Y_b})_\ast(\beta\times \gamma)$ is still valid for quotient
		varieties. Hence, to prove the theorem
		we just need to understand the action of the correspondences $ \bar{R}\vert_b$
		and $\bar{\Gamma}\vert_b$ for $b$ parameterizing a hypersurface that is a quotient variety. The first is easy\,: the action of $
		\bar{R}\vert_b$ still
		 factors over $\CH^n(\PP^{n+1})$ and so  $
		(\bar{R}\vert_b)_\ast(\beta\times \gamma)$ is proportional to $h^n$.
		As for the second, we can consider the locus swept out by lines
		\[ \ZZZ := \ima\bigl(  \LL\times_{B} \LL\times_{B} \LL\ \to\
		\YY\times_{B} \YY\times_{B} \YY\bigr)\ \ \ \subset\    \YY\times_{B}
		\YY\times_{B} \YY\ .\]
		The natural morphism $\LL\to B$ has fibers of dimension $n-3\geq 0$ over $B^\circ$, but
		the fiber dimension may jump outside of $B^\circ$. Because of
		upper-semicontinuity,
		any fiber $L_b$ has dimension $\ge n-3$. By construction of the
		refined Gysin homomorphism \cite[\S 6]{F}, we have that
		the zero-cycle $(\bar \Gamma\vert_b)_*(\beta\times\gamma) $ is supported on
	 the image of $L_b$ under the restriction over $b$ of the natural morphism $\LL \to \YY$.
		It follows that $(\bar \Gamma\vert_b)_*(\beta\times\gamma) $ 
	is supported on a finite union of lines contained in $Y_b$. We can conclude since
		any $0$-cycle on a line is proportional to $h^n$ in $Y_b$.      
	\end{proof}

	\begin{remark}\label{rmk:h}
		The image
		of
		the intersection product map
		\[ \CH^1(Y)\otimes \CH^{i}(Y)\ \to\ \CH^{i+1}(Y) \ ,\ \ \ 0\leq i<n\ ,\]
		is one-dimensional for every hypersurface $Y$ 
		of dimension $>2$ that is a quotient variety.
		Indeed in that case we have $\CH^1(Y) = \QQ [c_1(O_Y(1))]$ and $c_1(O_Y(1))
		\cdot \alpha$ is the restriction of a cycle on $\PP^{n+1}$ to~$Y$.
		In particular, for Calabi--Yau hypersurfaces of dimension $n \leq 4$ with
		quotient singularities, the image
		of
		the intersection product map
		\[ \CH^i(Y)\otimes \CH^{j}(Y)\ \to\ \CH^{i+j}(Y) \ ,\ \ \ i,j>0 \text{ and } i+j
		\leq n \]
		is one-dimensional.
	\end{remark}
	
	The following lemma is used in Remark~\ref{rmk:h}.
	
	\begin{lemma}\label{hyp} Let $\tau\colon Y\subset\PP^{n+1}(\C)$ be a
		hypersurface of degree $d$ that is a quotient variety. Let 
		$h\in \CH^1(Y)$ denote the restriction of $c_1(\OO_{\PP^{n+1}}(1))$ to $Y$.
		\begin{enumerate}[(i)]
\item The composition
\[ \CH^i(Y)\ \xrightarrow{\tau_\ast}\  \CH^{i+1}(\PP^{n+1})\
\xrightarrow{\tau^\ast}\
\CH^{i+1}(Y) \]
is the same as intersecting with $d h$.

\item Assume $n\ge 3$. Then $\CH^1(Y)=\QQ[h]$.
		\end{enumerate}
	\end{lemma}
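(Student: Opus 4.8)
The plan is to prove Lemma~\ref{hyp} in two parts, treating each item separately but with item (i) feeding naturally into item (ii).

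\smallskip

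\emph{Part (i).} The plan is to verify the projection formula in the setting of quotient varieties. Here $\tau\colon Y \hookrightarrow \PP^{n+1}$ is a closed embedding, and I want to show that the composition $\tau^\ast \circ \tau_\ast \colon \CH^i(Y) \to \CH^{i+1}(Y)$ equals multiplication by $dh$. First I would invoke the self-intersection formula: for a regular closed embedding, $\tau^\ast \tau_\ast(\alpha) = \alpha \cdot c_1(N_{Y/\PP^{n+1}})$, where $N_{Y/\PP^{n+1}}$ is the normal bundle, whose first Chern class is $\OO_Y(d) = dh$ since $Y$ has degree $d$. The subtlety is that $Y$ is only a quotient variety, not smooth, so $\tau$ need not be a regular embedding in the classical sense. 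However, by Lemma~\ref{quotient} we have at our disposal the full operational Chow cohomology formalism on $Y$ with its good functorial properties, and in particular the projection formula $\tau_\ast(\tau^\ast(c) \cdot \beta) = c \cdot \tau_\ast(\beta)$ holds. The cleanest route is to compute on $\PP^{n+1}$, which is smooth: for $\alpha \in \CH^i(Y)$, the class $\tau_\ast \alpha \in \CH^{i+1}(\PP^{n+1})$ can be pulled back, and since $Y$ is a Cartier divisor with class $dh$ on $\PP^{n+1}$, intersecting $\tau_\ast\alpha$ with $[Y] = d\,c_1(\OO_{\PP^{n+1}}(1))$ and using the projection formula gives the claim. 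The main point is that the operational formalism makes $Y \subset \PP^{n+1}$ behave like an effective Cartier divisor of class $dh$, so the self-intersection reduces to multiplication by $dh$.

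\smallskip

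\emph{Part (ii).} Assuming $n \ge 3$, I want $\CH^1(Y) = \QQ[h]$. The plan is to show that restriction $\tau^\ast \colon \CH^1(\PP^{n+1}) \to \CH^1(Y)$ is surjective, since $\CH^1(\PP^{n+1}) = \QQ[c_1(\OO(1))]$ maps $h$ to $h$. Equivalently, I want to see that every divisor class on $Y$ comes from $\PP^{n+1}$. First I would pass to the smooth cover $M'$ with $Y = M'/G$, or more directly use that the Picard group with rational coefficients of a quotient variety agrees with the $G$-invariants on the smooth model; the Noether--Lefschetz type statement needed is that for a hypersurface of dimension $\ge 3$, the restriction map on divisors (Picard groups) is an isomorphism. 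The cleanest argument is via the Lefschetz hyperplane theorem: for $Y \subset \PP^{n+1}$ of dimension $n \ge 3$, the restriction $H^2(\PP^{n+1},\QQ) \to H^2(Y,\QQ)$ is an isomorphism by the Lefschetz hyperplane theorem (valid because $Y$ has at worst quotient singularities, hence is a rational homology manifold and Poincar\'e duality applies). Combined with the fact that on a quotient variety $\CH^1 \to H^2$ is injective, or that $\CH^1(Y)_\QQ \cong \pic(Y)_\QQ$ injects into $H^2(Y,\QQ)$, this forces $\CH^1(Y) = \QQ[h]$.

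\smallskip

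The hard part will be handling the singularities rigorously in Part (ii): the Lefschetz hyperplane theorem and Poincar\'e duality must be justified for quotient varieties rather than smooth ones. The key observation is that quotient singularities are rational and $\QQ$-homology manifolds, so the topology behaves as in the smooth case after tensoring with $\QQ$; this is precisely the regime where the operational formalism of Lemma~\ref{quotient} and rational-coefficient cohomology cooperate. Part (i) is essentially formal once the operational projection formula is in hand.
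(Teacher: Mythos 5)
In part (i) your core citation (the self-intersection formula) is exactly what the paper uses (it cites Fulton, Proposition~2.6), but your execution wobbles in two places. The worry that $\tau$ may fail to be a regular embedding because $Y$ is singular is unfounded: a hypersurface is cut out by a single equation in the smooth ambient space, so it is an effective Cartier divisor and $\tau$ is a regular embedding of codimension one regardless of the singularities of $Y$; Fulton's intersection theory for divisors therefore applies directly and gives $\tau^\ast\tau_\ast\alpha = c_1\bigl(\OO_{\PP^{n+1}}(d)\vert_Y\bigr)\cap \alpha = dh\cdot\alpha$, with no need for the operational formalism. Moreover, the substitute argument you call ``the cleanest route'' does not prove the claim: the projection formula gives $\tau_\ast(dh\cdot \alpha) = d\, c_1(\OO_{\PP^{n+1}}(1))\cdot \tau_\ast\alpha$, an identity in $\CH_\ast(\PP^{n+1})$ which is merely the \emph{pushforward} of the desired one; since $\tau_\ast$ is not injective, you cannot descend it to an equality in $\CH_\ast(Y)$. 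Keep the self-intersection formula itself as the proof.

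In part (ii) you take a genuinely different route from the paper (topological Lefschetz rather than the Grothendieck--Lefschetz theorem, which the paper quotes in the form $\pic(Y)=\Z[h]$ for any, possibly singular, hypersurface of dimension $\geq 3$, combined with $\QQ$-factoriality of quotient varieties), but there is a genuine gap. You need $\CH^1(Y)\cong \pic(Y)\otimes\QQ$ to inject into $H^2(Y,\QQ)$, and your stated justification -- that $\CH^1\to H^2$ is injective on any quotient variety -- is false in general: by the exponential sequence the kernel of $\pic(Y)\to H^2(Y,\Z)$ is the image of $H^1(Y,\OO_Y)$, which need not vanish for quotient varieties (an abelian variety is a quotient variety, and there the kernel is $\pic^0\otimes\QQ\neq 0$). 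What rescues the argument for hypersurfaces is the vanishing $H^1(Y,\OO_Y)=0$, valid in dimension $\geq 2$ by the cohomology of $0\to \OO_{\PP^{n+1}}(-d)\to \OO_{\PP^{n+1}}\to \OO_Y\to 0$; this step is missing from your proof and must be supplied. Two smaller points: the Lefschetz hyperplane theorem for singular $Y$ holds because the complement $\PP^{n+1}\setminus Y$ is smooth affine (Hamm, Goresky--MacPherson), not because $Y$ is a $\QQ$-homology manifold; and the identification $\CH^1(Y)\cong\pic(Y)\otimes\QQ$ is precisely the $\QQ$-factoriality input that the paper also uses, so your route does not avoid it -- your remark about $G$-invariants on the smooth cover is the right way to justify that identification.
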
 
	
	\begin{proof}
		We recall that by convention, $\CH^i(Y)$ is identified with $\CH_{n-i}(Y)$
		(Lemma~\ref{quotient}). Point $(\rom1)$ is \cite[Proposition 2.6]{F}.
		As for $(\rom2)$, any (possibly singular) hypersurface $Y$ of dimension $\ge
		3$ has $\pic(Y)=\Z[h]$ (Grothendieck--Lefschetz). Because quotient varieties are
		$\QQ$-factorial, this implies that $\CH^1(Y)=\CH_{n-1}(Y)=\QQ[h]$.
	\end{proof}

	\begin{remark} It is possible to relax the hypotheses on the hypersurface $Y$
		in Theorem~\ref{cy}. Instead of demanding that $Y$ is a quotient variety, the
		argument proving Theorem~\ref{cy} goes through as soon as $Y$ is an {\em
			Alexander scheme\/} (in the sense of \cite{Vis}, \cite{Kim0}). In this case,
		$\CH^\ast(Y)$ (the operational Chow cohomology of Fulton \cite{F}) is isomorphic
		to the Chow groups $\CH_{n-\ast}(Y)$, and so the conclusion of Lemma
		\ref{quotient} holds for $Y$. Varieties with quotient singularities are examples
		of Alexander schemes.	
	\end{remark}

	\section{The Beauville--Voisin conjecture for zero-cycles on double EPW
		sextics}

	\subsection{Double EPW sextics and some of their geometry}\label{S:defEPW}
	As the name suggests, double EPW sextics are double covers of certain so-called
	EPW sextics\,:
	
	\begin{definition}[Eisenbud--Popescu--Walter \cite{EPW}] 
		Let $A\subset \wedge^3 \C^6$ be a subspace which
		is Lagrangian with respect to the symplectic form on $\wedge^3 \C^6$ given by
		the wedge product. The {\em EPW sextic associated to $A$\/} is
		\[ Y_A:= \Bigl\{  [v]\in \PP(\C^6)\ \vert\ \dim \bigl( A\cap ( v\wedge
		\wedge^2 \C^6)\bigr) \ge 1\Bigr\}\ \ \subset \PP(\C^6)\  .\]
		An {\em EPW sextic\/} is an $Y_A$ for some $A\subset \wedge^3 \C^6$
		Lagrangian.
	\end{definition}

	\begin{theorem}[O'Grady]
		Let $Y$ be an EPW sextic such that the singular locus $S:=\hbox{Sing}(Y)$ is a
		smooth irreducible surface. Let $f:X\to Y$ be the double cover branched over
		$S$. Then $X$
		is a smooth hyperK\"ahler fourfold of K3$^{[2]}$ type (called a {\em double
			EPW sextic}), 
		and the class $h:=f^*c_1(O_Y(1))\in \CH^1(X)$ defines a polarization of square
		$2$ for the Beauville--Bogomolov form.
		Double EPW sextics form a $20$-dimensional locally complete family.
	\end{theorem}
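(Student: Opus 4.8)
The statement is due to O'Grady \cite{OG}, and the plan is to follow his strategy, which I would organize into four stages.

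\emph{Construction of the double cover.} Writing $V=\C^6$ and $\PP^5=\PP(V)$, for each $[v]\in\PP^5$ the subspace $v\wedge\wedge^2 V\subset\wedge^3 V$ is $10$-dimensional, and these fit into a rank-$10$ subbundle $F\subset\wedge^3V\otimes\OO_{\PP^5}$. Composing the constant inclusion $A\otimes\OO\hookrightarrow\wedge^3V\otimes\OO$ with the projection onto $(\wedge^3V\otimes\OO)/F$ yields a morphism $\lambda_A$ of rank-$10$ bundles whose determinant is a section of $\OO(6)$ vanishing exactly on $Y=Y_A$. The cokernel of $\lambda_A$ is, after a suitable twist, a rank-one reflexive sheaf $\xi$ on $Y$ carrying a multiplication $\xi\otimes\xi\to\OO_Y$; setting $X:=\operatorname{Spec}_{\OO_Y}(\OO_Y\oplus\xi)$ defines the double cover $f\colon X\to Y$, branched along the second degeneracy locus $S=Y^{\ge 2}$.

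\emph{Smoothness.} Under the hypotheses that $S$ is a smooth surface and (as holds for general $A$) the deeper locus $Y^{\ge 3}$ is empty, I would run a local analysis along $S$: because $A$ is Lagrangian the degeneration $\lambda_A$ is self-dual, so transverse to $S$ it has the normal form of an ordinary double point, and $X$ resolves these transverse $A_1$-singularities, while away from $S$ the cover is étale over the smooth locus of $Y$. This gives $X$ smooth of dimension $4$.

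\emph{Deformation type and symplectic form.} This is the crux. The cleanest route is a specialization argument: produce a single Lagrangian $A_0$ for which $X_{A_0}$ is explicitly birational to a Hilbert square $S_0^{[2]}$ of a K3 surface, so that this member is of $\mathrm{K3}^{[2]}$ type and carries a holomorphic symplectic form. Over the connected base $B^\circ$ parameterizing those $A$ with $S_A$ smooth and $Y_A^{\ge 3}=\emptyset$, Ehresmann's theorem makes the smooth double covers into a locally trivial family, so simple connectedness, the Hodge numbers, and the deformation type propagate from the special member to every member. One concludes that each $X$ is simply connected with $h^{2,0}(X)=1$ and a nowhere-degenerate generator of $H^0(X,\Omega^2_X)$, hence an irreducible holomorphic symplectic fourfold deformation equivalent to $S^{[2]}$. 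Controlling the global degeneration of the singular total space and producing a workable $A_0$ is the main obstacle, and is where the bulk of O'Grady's work lies.

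\emph{Polarization and parameter count.} These are formal once the above is established. Combining the Fujiki relation $\int_X\alpha^4=3\,q(\alpha)^2$ on $\mathrm{K3}^{[2]}$-type fourfolds with $\int_X h^4=(\deg f)(\deg Y)=2\cdot 6=12$ gives $q(h)^2=4$, hence $q(h)=2$. For local completeness, the Lagrangian Grassmannian of the $20$-dimensional symplectic space $\wedge^3V$ has dimension $\tfrac{10\cdot 11}{2}=55$, and the quotient by $\operatorname{PGL}_6$ (of dimension $35$) is $20$-dimensional; since polarized $\mathrm{K3}^{[2]}$-type fourfolds with a square-$2$ polarization also form a $20$-dimensional moduli space, it remains to check that the Kodaira--Spencer map of the family surjects onto the polarized first-order deformations, equivalently that the period map is a local immersion, which follows from local Torelli.
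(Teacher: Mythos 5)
Your outline follows the same route as the paper, which for this statement simply cites \cite[Theorem 1.1(2)]{OG} and records the genericity and moduli-count remarks: your stages 1--3 (the Lagrangian degeneracy-locus construction of the cover, the smoothness analysis along $S$, and the specialization to Hilbert squares of genus-6 K3 surfaces to pin down the deformation type) together with the Fujiki computation $3\,q(h)^2=\int_X h^4=2\cdot 6=12$, hence $q(h)=2$, are a fair summary of O'Grady's argument, and you rightly identify stage 3 as the technical heart. (One small looseness there: Ehresmann alone does not propagate the \emph{nowhere-degeneracy} of the $(2,0)$-form; one needs the standard remark that $\sigma^2$ is a holomorphic section of $K_X$ with $c_1(K_X)=0$, hence nowhere vanishing. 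This is routine.)

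The genuine gap is in your last stage. The dimension count $\dim \mathrm{LG}(\wedge^3\C^6)-\dim \mathrm{PGL}_6=55-35=20$ matching the dimension of the polarized deformation space only bounds the number of moduli from \emph{above}; local completeness requires the Kodaira--Spencer map of the EPW family to surject onto the polarized first-order deformations, and this does \emph{not} ``follow from local Torelli.'' Local Torelli concerns the Kuranishi family: it merely translates ``Kodaira--Spencer is surjective'' into ``the period map of the EPW family has rank $20$,'' but it cannot produce that rank -- a priori the $55$-dimensional family could have classifying-map fibers of dimension $>35$ and land in a proper subvariety of the $20$-dimensional polarized moduli, and nothing in your argument rules this out, so as written the step fails. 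The missing ingredient, which is where O'Grady's proof of local completeness actually lies, is an upper bound on those fibers: one must show that the polarized fourfold $(X_A,h)$ determines $A$ up to the $\mathrm{PGL}_6$-action and finite ambiguity. This is done by recovering the double cover $f\colon X_A\to Y_A\subset\PP^5$ \emph{intrinsically} as the morphism attached to the complete linear system $|h|$ (using $h^0\bigl(X_A,\OO_{X_A}(h)\bigr)=6$, e.g.\ from Riemann--Roch on $\mathrm{K3}^{[2]}$-type fourfolds plus Kodaira vanishing), and then recovering the Lagrangian $A$ from the sextic hypersurface $Y_A$. With this, any two Lagrangians giving isomorphic polarized fourfolds are $\mathrm{PGL}_6$-conjugate up to finitely many choices, the fibers have dimension exactly $35$, and only then does the dimension count close the argument.
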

	
	\begin{proof} 
		This is \cite[Theorem 1.1(2)]{OG}. Let us remark that the hypothesis on
		$\hbox{Sing}(Y)$ is satisfied by a generic EPW sextic (more precisely, it
		suffices that the Lagrangian subspace $A$ be in $\hbox{LG}(\wedge^3 V)^0$, which
		is a certain open dense subset of $\hbox{LG}(\wedge^3 V)$ defined in
		\cite[Section 2]{OG}). Letting $A$ vary in $\hbox{LG}(\wedge^3 V)^0$, one
		obtains a locally complete family with $20$ moduli (as noted in
		\cite[Introduction]{OG}).		
	\end{proof}

	Let $Z$ be the invariant locus of the involution $\iota : X\to X$, which is also
	the pre-image of the singular locus $S$ of $Y$ along $f:X\to Y$. The following
	proposition summarizes the information that will be needed concerning $Z$ and
	its class in $\CH^2(X)$.
	
	\begin{proposition}\label{prop:Z}
		The  surface $Z$ is smooth projective, regular and Lagrangian. Moreover its
		class modulo rational equivalence satisfies the following relation
		\begin{equation}\label{eq:ferretti}
		3Z = 15h^2 - c_2(X)\quad  \text{in } \CH^2(X)\,.
		\end{equation}
	\end{proposition}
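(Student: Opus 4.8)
My plan is to dispatch the three geometric assertions first and then establish the Chow-theoretic identity \eqref{eq:ferretti} by a localization argument whose single unknown coefficient is fixed numerically.

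\emph{Geometry of $Z$.} Since $f\colon X\to Y$ is a double cover and $Z=f^{-1}(S)$ is its ramification locus lying over the smooth branch surface $S=\mathrm{Sing}(Y)$, the restriction $f|_Z\colon Z\to S$ is an isomorphism; hence $Z$ is smooth, irreducible and projective and shares the invariants of $S$, and I would take the regularity $h^1(\OO_Z)=0$ from O'Grady's explicit analysis of $S=S_A$. For the Lagrangian property, write $\sigma$ for the holomorphic symplectic form on $X$; as $\iota$ is anti-symplectic, $\iota^*\sigma=-\sigma$, so for tangent vectors $v,w$ to the fixed locus $Z$ one has $\sigma(v,w)=\sigma(\iota_*v,\iota_*w)=(\iota^*\sigma)(v,w)=-\sigma(v,w)$, whence $\sigma|_Z=0$. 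Since $\dim Z=2=\tfrac12\dim X$, the surface $Z$ is Lagrangian, and consequently $N_{Z/X}\cong \Omega^1_Z$ via $\sigma$, a fact I will use below.

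\emph{Localizing the Chern-class identity.} Let $g\colon X\to\PP^5$ be the composite of $f$ with $Y\hookrightarrow\PP^5$, so that $g^*\OO(1)=\OO_X(h)$ and the Euler sequence gives $c(g^*T_{\PP^5})=(1+h)^6$. Over the smooth locus $Y\setminus S$ the map $f$ is étale, so on $X\setminus Z$ the conormal sequence of the degree-$6$ hypersurface pulls back to $0\to T_X\to g^*T_{\PP^5}\to\OO_X(6h)\to 0$, giving $c(T_X)|_{X\setminus Z}=(1+h)^6/(1+6h)$; its codimension-$2$ part is $c_2(X)|_{X\setminus Z}=15h^2|_{X\setminus Z}$. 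From the localization sequence $\CH_2(Z)\to\CH^2(X)\to\CH^2(X\setminus Z)\to 0$, together with $\CH_2(Z)=\QQ[Z]$ ($Z$ being irreducible of dimension $2$), I conclude
\[ c_2(X)-15h^2=\mu\,[Z]\quad\text{in }\CH^2(X) \]
for some $\mu\in\QQ$, so that \eqref{eq:ferretti} is exactly the assertion $\mu=-3$.

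\emph{Pinning down the coefficient, and the main obstacle.} To evaluate $\mu$ I would intersect the displayed identity with $[Z]$, i.e.\ restrict to $Z$ and take degrees. Using $T_X|_Z\cong T_Z\oplus \Omega^1_Z$ one finds $\int_Z c_2(X)|_Z=2e(Z)-K_Z^2$; moreover $\int_Z h^2=\int_S (h_Y|_S)^2=\deg S$ and the self-intersection is $\int_Z [Z]|_Z=\int_Z c_2(N_{Z/X})=\int_Z c_2(\Omega^1_Z)=e(Z)$. This gives the single numerical relation $2e(Z)-K_Z^2-15\deg S=\mu\,e(Z)$, so $\mu=2-(K_Z^2+15\deg S)/e(Z)$ as soon as $e(Z)\neq 0$. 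The essential content, and the step most dependent on external input, is precisely this evaluation: it rests entirely on the explicit invariants $\deg S$, $K_S^2$, $e(S)$ of the singular surface of the EPW sextic (equivalently, on the precise local structure of the singularities of $Y$ along $S$), which I would import from O'Grady's computations to obtain $\mu=-3$. The localization step itself is formal once the smoothness of $Z$ and the identification $N_{Z/X}\cong\Omega^1_Z$ are in hand.
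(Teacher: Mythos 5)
Your proposal is correct, but it is genuinely more self-contained than the paper's proof, which is essentially a triple citation: the paper quotes Beauville \cite{Banti} for smoothness and the Lagrangian property of the fixed locus, Ferretti \cite[Lemma~4.1]{Fe} for the relation \eqref{eq:ferretti}, and Ferretti's thesis \cite[Corollary~3.19]{Fe0} for the regularity $q(Z)=0$. Your fixed-locus argument ($\iota^*\sigma=-\sigma$ forces $\sigma|_Z=0$, hence $Z$ Lagrangian and $N_{Z/X}\cong\Omega^1_Z$) is precisely the proof of Beauville's cited lemma, and your localization argument is in substance a reconstruction of the cited result of Ferretti: since $f$ is \'etale over $Y\setminus S$ and $Y\setminus S$ is a smooth open piece of a sextic hypersurface, $c_2(X)$ restricts to $15h^2$ on $X\setminus Z$; the exact sequence $\CH_2(Z)\to\CH^2(X)\to\CH^2(X\setminus Z)\to 0$ together with $\CH_2(Z)=\QQ[Z]$ then gives $c_2(X)-15h^2=\mu[Z]$, and $\mu$ is pinned down by intersecting with $[Z]$. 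What your route buys is transparency: the relation is seen to hold in $\CH^2(X)$, not merely in cohomology, for essentially formal reasons, the only non-formal input being three numerical invariants of $S=\mathrm{Sing}(Y)$. What it costs is exactly that input, which you leave unverified: one needs $\deg S=40$, $e(Z)=192$ and $K_Z^2=360$ (available in O'Grady \cite{OG} and Ferretti \cite{Fe0, Fe}), which indeed give $\mu=\bigl(2\cdot 192-360-15\cdot 40\bigr)/192=-576/192=-3$, and in particular $e(Z)\neq 0$ so your division is legitimate; a complete write-up must state these numbers and their source, since the whole identity hangs on them. Two minor corrections: the regularity $q(Z)=0$ is not in O'Grady's paper but in Ferretti's thesis (or can be obtained via \cite{V8}, as explained in \cite{La}), so your attribution should be adjusted; and you do not actually need the splitting $T_X|_Z\cong T_Z\oplus\Omega^1_Z$, since the tangent--normal exact sequence already yields $c_2(T_X)|_Z=2e(Z)-K_Z^2$ (though the splitting does hold for fixed loci of involutions).
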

	\begin{proof}
		The surface $Z$ is 
		isomorphic to the singular locus of the EPW sextic $Y$, which is smooth
		irreducible by assumption.
		The fixed locus of an anti-symplectic involution on a hyperK\"ahler variety is
		(smooth and) Lagrangian \cite[Lemma 1]{Banti}.  The relation \eqref{eq:ferretti}
		is due to Ferretti \cite[Lemma~4.1]{Fe}.
		
		It remains to show that $Z$ is regular, \emph{i.e.} that $q(Z) := h^1(O_Z) = 0$. This is done in
		\cite[Corollary 3.19]{Fe0}.
(Alternatively, the irregularity and the geometric genus of $Z$ can
		also be computed using the Chow-theoretic results of \cite{V8}, as explained in
		\cite[Corollary 4.2$(\rom4)$]{La}.) 
	\end{proof}

	\subsection{Proof of Theorem~\ref{thm:ferretti}}
	
	We start by recalling the main result of \cite{Fe}, which establishes the
	Beauville--Voisin conjecture for the generic double EPW sextic.
	
	\begin{theorem}[Ferretti \cite{Fe}] \label{thm:ferretti-original}
		Let $X$ be a double EPW sextic. Consider the $\QQ$-subalgebra
		\[ \operatorname{R}^\ast(X):= \langle h, c_j(X)\rangle\ \ \ \subset\
		\CH^\ast(X)\
		\]
		generated by the polarization $h=f^*c_1(O_Y(1))$ and the
		Chern classes.
		The restriction of the cycle class map $\operatorname{R}^i(X)\to H^{2i}(X)$ is
		injective for all $i$. Moreover,
		\begin{equation}\label{eq:c2}
		c_2(X) \cdot h = 5h^3 \quad \text{in } \CH^3(X)\ .
		\end{equation}
	\end{theorem}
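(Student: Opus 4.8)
The plan is to compute $\operatorname{R}^i(X)$ in every codimension, the guiding observation being that all generators of $\operatorname{R}^*(X)$ are invariant under the covering involution $\iota$. Indeed $h=f^*c_1(O_Y(1))$ is a pullback, and $\iota^*c_j(X)=c_j(\iota^*TX)=c_j(X)$, so $\operatorname{R}^*(X)\subseteq\CH^*(X)^+$. As $f\colon X\to Y$ is the quotient by $\iota$, over $\QQ$ one has $f_*f^*=2$ and $f^*f_*=1+\iota^*$, whence $\CH^*(X)^+=f^*\CH^*(Y)$; thus $h=f^*H$ with $H=c_1(O_Y(1))$, and $c_2(X)=f^*\bar c_2$, $c_4(X)=f^*\bar c_4$ for suitable classes $\bar c_2\in\CH^2(Y)$, $\bar c_4\in\CH^4(Y)$. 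The holomorphic symplectic form gives an isomorphism $TX\cong\Omega^1_X$, forcing $c_j(X)=(-1)^jc_j(X)$, so the odd Chern classes $c_1(X)=c_3(X)=0$ vanish already in $\CH^*(X)$. Hence $\operatorname{R}^*(X)$ is generated by $h$, $c_2(X)$ and $c_4(X)$.

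I would then feed the Calabi--Yau sextic $Y\subset\PP^5$ (a four-dimensional quotient variety) into Theorem~\ref{cy} and Remark~\ref{rmk:h}: the image of $\CH^i(Y)\otimes\CH^j(Y)\to\CH^{i+j}(Y)$ is one-dimensional for $i,j>0$, $i+j\le4$, and since it contains the nonzero class $H^{i+j}$ it equals $\QQ\,H^{i+j}$. Applying $f^*$, every product of two positive-codimension generators of total codimension $\le4$ lands in $\QQ\,h^{i+j}$. This disposes of the low codimensions: the cases $i=0,1$ are trivial; for $i=2$ the classes $[h^2]$ and $[c_2(X)]$ are $\QQ$-independent in $H^4(X)$ (the Chern numbers $\int_Xh^4=12$, $\int_Xc_2(X)h^2=60$, $\int_Xc_2(X)^2=828$ rule out any proportionality), so $\operatorname{R}^2\hookrightarrow H^4(X)$; and for $i=3$ the product relation gives $c_2(X)\cdot h=f^*(\bar c_2\cdot H)\in\QQ\,h^3$, while $\int_Xc_2(X)h^2=60=5\int_Xh^4$ fixes the constant, yielding $c_2(X)\cdot h=5h^3$. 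Since $c_3(X)=0$ this gives $\operatorname{R}^3=\QQ\,h^3\hookrightarrow H^6(X)$.

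Everything thus reduces to zero-cycles. In codimension $4$ the generators are $h^4$, $h^2c_2(X)$, $c_2(X)^2$ and $c_4(X)$, the generator $h\,c_3(X)$ being zero. The first three are products of positive-codimension invariant classes, hence pullbacks of products in $\CH^4(Y)$, and so lie in $\QQ\,h^4$ by Theorem~\ref{cy}; the same applies to $Z^2$ and $h^2\cdot Z$, as $Z$ is $\iota$-invariant. The injectivity at $i=4$ therefore comes down to the single relation $c_4(X)\in\QQ\,h^4$, equivalently $c_4(X)=27\,h^4$ after comparing degrees (here $\int_Xc_4(X)=324$). This is the main obstacle. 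My plan to reach it is to compute $c_4(X)$ through the double cover: over $X\setminus Z$ the morphism $f$ is \'etale onto the smooth locus $Y\setminus S$, so there $c_4(X)=f^*c_4(TY)$, and the exact sequence $0\to TY\to T\PP^5|_Y\to O_Y(6)\to0$ gives $c(TY)=(1+H)^6/(1+6H)$, a polynomial in $H$; thus $f^*c_4(TY)\in\QQ\,h^4$. The difference $c_4(X)-f^*c_4(TY)$ is supported on the ramification surface $Z$, and, using that $Z$ is Lagrangian with $N_{Z/X}\cong\Omega^1_Z$ (Proposition~\ref{prop:Z}), should be a $\QQ$-combination of $Z^2$ and $h^2\cdot Z$, hence again in $\QQ\,h^4$. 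The delicate step--the genuine content of the theorem--is the precise Chern-class computation for this codimension-two--branched cover, i.e.\ the determination of the $Z$-supported correction term, refining Ferretti's relation \eqref{eq:ferretti} to top degree.
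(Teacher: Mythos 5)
Your handling of codimensions $1$--$3$, and of the part of $\operatorname{R}^4(X)$ spanned by $h^4$, $h^2\cdot c_2(X)$ and $c_2(X)^2$, is correct and is essentially the paper's argument: all generators are $\iota$-invariant, hence (over $\QQ$) pull back from the quotient $Y$, and Theorem~\ref{cy} together with Remark~\ref{rmk:h} makes the relevant products one-dimensional, while linear independence of $[h^2]$ and $[c_2(X)]$ in $H^4(X)$ settles degree $2$. Two of your touches even sharpen the exposition: the explicit vanishing $c_1(X)=c_3(X)=0$ from $T_X\cong\Omega^1_X$, and the derivation of \eqref{eq:c2} by combining $c_2(X)\cdot h\in\QQ\,h^3$ (Remark~\ref{rmk:h} plus the projection formula) with the numerical data $\int_X h^4=12$, $\int_X c_2(X)\cdot h^2=60$; the paper instead quotes \eqref{eq:c2} from Ferretti's computation \cite[Proposition~4.5]{Fe}.

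The gap is exactly where you locate it, and it is genuine: your proposal does not prove $c_4(X)\in\QQ\,h^4$. The comparison of $c_4(X)$ with the pullback of $c_4$ of the smooth locus of $Y$, via the localization sequence, only yields that $c_4(X)-\lambda h^4$ is the pushforward of \emph{some} zero-cycle class on $Z$. That is not enough: the image of $\CH_0(Z)\to\CH^4(X)$ is not known to be one-dimensional --- knowing this would essentially say that $Z$ is a constant-cycle surface in $X$, which the paper explicitly flags as an open question in the Remark following Theorem~\ref{thm:ferretti}. So your assertion that the $Z$-supported correction ``should be'' a combination of $Z^2$ and $h^2\cdot Z$ is precisely the statement that has to be proven, and no argument is offered for it; a bare support argument cannot give it. The paper closes this step by citing \cite[Proposition~4.5]{Fe}, where Ferretti proves that $c_4(X)$ lies in the image of $\CH^2(X)^+\otimes\CH^2(X)^+\to\CH^4(X)$ (concretely, a polynomial in $h^2$ and $Z$, compatible with \eqref{eq:ferretti}); this is obtained not from a support argument but by blowing up $Z\subset X$ (resp.\ $S\subset Y$) so that the covering becomes a double cover of a smooth fourfold branched along a divisor, where the standard double-cover and blow-up Chern class formulas apply, and then pushing back down to $X$. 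To complete your proof you must either carry out that computation or cite it, as the paper does --- the theorem is, after all, attributed to Ferretti.
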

	
	\begin{proof} For illustrative purposes, let us briefly show how Ferretti's
		original argument can be slightly simplified by exploiting Theorem~\ref{cy}.
		
		First we observe that the tangent bundle $T_X$ is $\iota$-invariant, so that $\operatorname{R}^*(X)$ consists of $\iota$-invariant cycles. In degree 1, the injectivity is obvious.
		In degree 2, the injectivity follows from the fact that $h^2$ and $c_2(X)$ are
		linearly independent in $H^4(X)$. In degree 3, the generators of
		$\operatorname{R}^3(X)$ are $h^3$ and $h\cdot c_2(X)$ and, by Remark~\ref{rmk:h},  $h\cdot c_2(X)$ is	a multiple of $h^3$, thereby yielding the injectivity in degree 3. 
         In degree 4, the generators of
		$\operatorname{R}^4(X)$ are $h^4$, $h^2\cdot c_2(X)$, $c_2(X)\cdot c_2(X)$ and
		$c_4(X)$. Since $c_2(X)$ is $\iota$-invariant, Theorem~\ref{cy} yields that the subspace of  $\operatorname{R}^*(X)$ spanned by $h^4$, $h^2\cdot c_2(X)$ and $c_2(X)\cdot c_2(X)$ is one-dimensional. The injectivity in degree 4 now follows from the Chern class computation carried out by Ferretti in  \cite[Proposition 4.5]{Fe}, where the relation~\eqref{eq:c2} is also established.
	\end{proof}

	Consider now the eigenspace decomposition $$\CH^1(X) = \CH^1(X)^+ \oplus
	\CH^1(X)^-$$ for the action of the  involution $\iota$. Note that $\CH^1(X)^+ =
	f^*\CH^1(Y)$ is one-dimensional spanned by $h:= f^*c_1(O_Y(1))$ 
	and that $\CH^1(X)^-$ consists of primitive divisors. 
	The proof of Theorem~\ref{thm:ferretti} is a combination of Theorem~\ref{cy},
	which describes the intersection of $\iota$-invariant cycles on $X$ of positive
	and complementary codimension, and of the following lemma and elementary claim.
	
	\begin{lemma}\label{lem:key}
		Let $Z$ be the smooth Lagrangian surface which is the invariant locus of the
		involution $\iota : X\to X$.  Then $Z\cdot D = 0$ in $\CH^3(X)$ for all $D \in
		\CH^1(X)^-$.
	\end{lemma}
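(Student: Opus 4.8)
The plan is to reduce the statement to a restriction computation on $Z$ itself. Write $j\colon Z\hookrightarrow X$ for the closed embedding of the fixed surface. Since $Z$ and $X$ are both smooth, $j$ is a regular embedding, so the Gysin pullback $j^\ast\colon \CH^\ast(X)\to\CH^\ast(Z)$ and the proper pushforward $j_\ast\colon \CH^\ast(Z)\to\CH^\ast(X)$ are available and satisfy the projection formula. The product $Z\cdot D\in\CH^3(X)$ is then nothing but $j_\ast\bigl(j^\ast D\bigr)$, since $j_\ast(1_Z)=[Z]$ and $j_\ast(1_Z\cdot j^\ast D)=[Z]\cdot D$ by the projection formula. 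Thus it suffices to prove that the restricted divisor class $j^\ast D\in\CH^1(Z)$ vanishes.

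The key point is that $\iota$ fixes $Z$ pointwise, so that $\iota\circ j=j$ as morphisms $Z\to X$. By contravariant functoriality of $\CH^\ast$ this yields $j^\ast=j^\ast\circ\iota^\ast$ on $\CH^\ast(X)$. Evaluating at an anti-invariant divisor $D\in\CH^1(X)^-$, for which $\iota^\ast D=-D$, we obtain
\[
j^\ast D = j^\ast(\iota^\ast D) = j^\ast(-D) = -\,j^\ast D,
\]
hence $2\,j^\ast D = 0$. Because we work throughout with $\QQ$-coefficients, this forces $j^\ast D=0$ in $\CH^1(Z)$, and therefore $Z\cdot D=j_\ast\bigl(j^\ast D\bigr)=0$, as desired.

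I do not anticipate a serious obstacle here: the argument is purely formal once one knows that $Z$ is smooth (so that $j$ is a regular embedding and the projection formula applies) and that $\iota$ restricts to the identity on $Z$ --- both of which are recorded in Proposition~\ref{prop:Z} and in the definition of $Z$ as the invariant locus of $\iota$. The only point worth flagging is that the vanishing $j^\ast D=0$ genuinely uses rational coefficients: with integral coefficients one would only conclude that $2\,j^\ast D=0$, and the potential $2$-torsion could in principle survive.
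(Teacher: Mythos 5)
Your proof is correct. It is essentially a streamlined version of the parenthetical \emph{alternative} proof given in the paper: both hinge on the identity $\iota\circ j=j$, which holds because $Z$ is pointwise fixed by $\iota$. The paper's alternative argument uses the covariant consequence of this identity --- namely $j_\ast\CH^1(Z)\subset\CH^3(X)^+$ because $\iota_\ast\circ j_\ast=j_\ast$ --- together with the inclusion $Z\cdot\CH^1(X)^-\subset\CH^3(X)^-$ and the fact that $\CH^3(X)^+\cap\CH^3(X)^-=0$; you instead use the contravariant consequence $j^\ast\circ\iota^\ast=j^\ast$ to get $j^\ast D=-j^\ast D$, killing $j^\ast D$ already in $\CH^1(Z)$ before pushing forward. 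This is marginally more direct and yields the slightly stronger vanishing $j^\ast D=0$ on $Z$ itself. The paper's \emph{primary} proof is different in nature: it is geometric, arguing that $j^\ast D$ is homologically trivial because $Z$ is Lagrangian and $H^2_{prim}(X)=H^2_{tr}(X)$ for the very general double EPW sextic, and then concluding $j^\ast D=0$ from the regularity $q(Z)=0$ of $Z$ recorded in Proposition~\ref{prop:Z}. Your formal route (like the paper's alternative) buys independence from that regularity statement, which is a nontrivial input taken from Ferretti's thesis; the geometric route, on the other hand, is the one that would generalize to Lagrangian regular surfaces that are not fixed loci of an involution, where no identity $\iota\circ j=j$ is available.
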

	\begin{proof} Denote $j: Z\hookrightarrow X$ the embedding.
		Since $Z$ is Lagrangian and defined for all double EPW sextics and since
		$H^2_{prim}(X) = H^2_{tr}(X)$ for the very general double EPW sextic, we have
		that $j^*H^2(X)_{prim} = 0$ and hence that $j^*\CH^1(X)^-$ consist of
		homologically trivial divisors on~$Z$. Since $Z$ is regular
		(Proposition~\ref{prop:Z}), we conclude that $j^*\CH^1(X)^- = 0$ and hence that
		$Z\cdot \CH^1(X)^- = 0$.
		
		(Alternative proof\,: since $Z\in \CH^2(X)^+$, we have $Z\cdot \CH^1(X)^-\subset
		\CH^3(X)^-$. On the other hand, $Z\cdot \CH^1(X)$ is generated by
		$j_\ast \CH^1(Z)$. But any divisor in $Z$ is $\iota$-invariant (as $Z$ is the
		fixed locus of $\iota$) and so $Z\cdot \CH^1(X)\subset \CH^3(X)^+$.
		The lemma follows from the fact that $\CH^3(X)^+\cap \CH^3(X)^-=0$.)
	\end{proof}

	\begin{claim}\label{claim}
		$h\cdot \alpha$ is a multiple of $h^3$ for all $\alpha \in \CH^2(X)^+$.
	\end{claim}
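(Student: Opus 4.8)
The plan is to descend the product to the EPW sextic $Y$ and apply Remark~\ref{rmk:h}, the point being that $\iota$-invariant cycles are pullbacks from $Y$. Throughout, write $h_Y := c_1(O_Y(1)) \in \CH^1(Y)$, so that $h = f^* h_Y$.

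First I would identify $\CH^2(X)^+$ with $f^* \CH^2(Y)$. Since $f \circ \iota = f$, pullback gives $f^* \CH^*(Y) \subseteq \CH^*(X)^+$. Conversely, $f$ realizes $Y$ as the quotient $X/\langle \iota \rangle$, so with $\QQ$-coefficients one has $f^* f_* = \mathrm{id} + \iota^*$ on $\CH^*(X)$ (this holds even though $Y$ is only a quotient variety, by the operational formalism recalled in Lemma~\ref{quotient}). Hence for $\alpha \in \CH^2(X)^+$ one gets $f^* f_* \alpha = 2\alpha$, so that $\alpha = f^*\bigl(\tfrac12 f_* \alpha\bigr)$. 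Writing $\beta := \tfrac12 f_* \alpha \in \CH^2(Y)$, we have $\alpha = f^* \beta$.

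Second, since $f^*$ is a homomorphism of Chow rings,
\[ h \cdot \alpha = f^* h_Y \cdot f^* \beta = f^*(h_Y \cdot \beta). \]
Now $Y \subset \PP^5$ is a degree-$6$ hypersurface of dimension $4 > 2$ that is a quotient variety, so Remark~\ref{rmk:h} applies and the image of $\CH^1(Y) \otimes \CH^2(Y) \to \CH^3(Y)$ is one-dimensional. Concretely, Lemma~\ref{hyp}(i) gives $6\, h_Y \cdot \beta = \tau^* \tau_* \beta$, where $\tau : Y \hookrightarrow \PP^5$ is the inclusion, and $\tau_* \beta \in \CH^3(\PP^5) = \QQ\, h_{\PP^5}^3$; hence $h_Y \cdot \beta = \lambda\, h_Y^3$ for some $\lambda \in \QQ$. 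Pulling back along $f$ and using $f^*(h_Y^3) = h^3$ yields $h \cdot \alpha = \lambda h^3$, as claimed.

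I do not anticipate a genuine obstacle here; the only point requiring attention is that all the operations used---the ring structure of $f^*$, the relation $f^* f_* = \mathrm{id} + \iota^*$, and the functoriality invoked in Lemma~\ref{hyp}---remain valid for the quotient variety $Y$ with $\QQ$-coefficients, which is precisely what Lemma~\ref{quotient} and the operational Chow formalism guarantee.
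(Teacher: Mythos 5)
Your proof is correct and follows essentially the same route as the paper: descend the intersection to the EPW sextic $Y$ via the double cover $f$ and invoke Remark~\ref{rmk:h} (equivalently Lemma~\ref{hyp}(i)) there, then pull back. The only difference is cosmetic---the paper compresses the descent into a one-line appeal to the projection formula, whereas you make the underlying identification $\alpha = f^*\bigl(\tfrac12 f_*\alpha\bigr)$ via $f^*f_* = \mathrm{id} + \iota^*$ explicit, which if anything cleans up the paper's argument.
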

	\begin{proof}
		By the projection formula,
		the cycle $h\cdot \alpha$ is the pull-back along $f:X\to Y$ of the intersection
		of $c_1(O_Y(1))$ with the codimension-2 cycle $f_*\alpha$, and so
		(Remark~\ref{rmk:h}) is the pull-back along $f$ of a multiple of
		$c_1(O_Y(1))^3$, \emph{i.e.} it is a multiple of $h^3$. 
	\end{proof}
	
	\begin{proof}[Proof of Theorem~\ref{thm:ferretti}]
		First we observe that $c_2(X)$ is $\iota$-invariant (since $T_X$ is $\iota$-invariant) and that $c_4(X)$ belongs to the image
		of $\CH^2(X)^+ \otimes \CH^{2}(X)^+ \to \CH^4(X)$ (this follows from
		\cite[Proposition~4.5]{Fe}). 
		Next, let $D_k \in \CH^1(X)^-$ be anti-invariant divisors on $X$. 
		We note from Claim~\ref{claim} that, since $D_1\cdot D_2$ is $\iota$-invariant,
		$h\cdot D_1 \cdot D_2$ is a multiple of $h^3$.
		Thus $h\cdot D_1\cdot D_2 \cdot D_3$ is a multiple of $h^3\cdot D_3$.
		Intersecting Ferretti's relation \eqref{eq:ferretti} with $h\cdot D_3$,
		Lemma~\ref{lem:key} yields that $$ 0 = Z\cdot D_3 \cdot h = 15h^3\cdot D_3 -
		c_2(X)\cdot h \cdot D_3 = 15h^3\cdot D_3 - 5h^3\cdot D_3 = 10 h^3\cdot D_3,$$
		where the third equality comes from Ferretti's relation (\ref{eq:c2}). It follows that
		$\operatorname{R}^4(X)$ is spanned by cycles in $\mathrm{im} \big(\CH^i(X)^+
		\otimes \CH^{4-i}(X)^+ \to \CH^4(X) \big)$ for $i=1,2$. We conclude with
		Theorem~\ref{cy}.
	\end{proof}

	\begin{remark} The alternative proof of Lemma~\ref{lem:key} also shows that
		$Z\cdot \CH^2(X)^-=0$. Combined with Theorem~\ref{thm:ferretti},
		this implies that
		\[ Z\cdot \CH^2(X)=\QQ[c_4(X)]\ .\]
		This is an indication that $Z$ might perhaps be a constant cycle surface in
		$X$. 
	\end{remark}	
	
	\subsection{A variant of Theorem~\ref{thm:ferretti}} If one is willing to drop
	primitive divisors, then one can deal with codimension-3 cycles\,:
	\begin{theorem} \label{thm:ferretti2}
		Let $X$ be a double EPW sextic, and let $\iota$ be its anti-symplectic
		involution. Consider the $\QQ$-subalgebra
		\[ \operatorname{R}^\ast(X):= \langle h, \CH^2(X)^+, c_j(X)\rangle\ \ \
		\subset\ \CH^\ast(X)\
		\]
		generated by $h= f^*c_1(O_Y(1))$, $\iota$-invariant codimension-2 cycles and
		Chern classes.
		The restriction of the cycle class map $\operatorname{R}^i(X)\to H^{2i}(X)$ is
		injective for
		$i \geq 3$.
	\end{theorem}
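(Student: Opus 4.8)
The plan is to handle the two relevant codimensions $i=3$ and $i=4$ (recall $\dim X=4$) separately. The guiding observation is that, having discarded the primitive divisors $\CH^1(X)^-$ from the list of generators, the graded pieces $\operatorname{R}^3(X)$ and $\operatorname{R}^4(X)$ each collapse to a line, so that injectivity reduces to exhibiting a single nonzero cohomology class.

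For $i=3$ I would argue as follows. Since $T_X$ is $\iota$-invariant and the holomorphic symplectic form identifies $T_X$ with $\Omega^1_X$, the odd Chern classes vanish, so the homogeneous generators of $\operatorname{R}^\ast(X)$ of codimension at most $3$ are $h$ (in codimension $1$) and the elements of $\CH^2(X)^+$ (in codimension $2$, among which sit $h^2$ and $c_2(X)$). Hence $\operatorname{R}^3(X)$ is spanned by the products $h\cdot\alpha$ with $\alpha\in\CH^2(X)^+$. By Claim~\ref{claim} each such product is a rational multiple of $h^3$, so $\operatorname{R}^3(X)=\QQ\,h^3$. As $h$ is a polarization, $h^3\neq 0$ in $H^6(X)$, and the cycle class map is injective in codimension $3$.

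For $i=4$ the quickest route is to observe that the subalgebra $\langle h, \CH^2(X)^+, c_j(X)\rangle$ appearing here is contained in the subalgebra $\langle \CH^1(X), \CH^2(X)^+, c_j(X)\rangle$ of Theorem~\ref{thm:ferretti}, so the desired injectivity in codimension $4$ is a special case of that theorem. For completeness one can argue directly: since $h^2\in\CH^2(X)^+$, every codimension-$4$ product of generators other than $c_4(X)$ lies in the image of $\CH^2(X)^+\otimes\CH^2(X)^+\to\CH^4(X)$; identifying $\CH^2(X)^+=f^\ast\CH^2(Y)$ (with $\QQ$-coefficients, pull-back along the double cover $f\colon X\to Y$ is an isomorphism onto the invariants, $Y$ being the quotient variety $X/\iota$) and applying Theorem~\ref{cy} to the EPW sextic $Y$---a Calabi--Yau sextic fourfold that is a quotient variety---shows this image equals $\QQ\,h^4$; finally $c_4(X)\in\QQ\,h^4$ by Ferretti's Chern-class computation \cite[Proposition~4.5]{Fe}. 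Thus $\operatorname{R}^4(X)=\QQ\,h^4$ injects into $H^8(X)$.

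I do not expect a genuine obstacle here: the statement is essentially a bookkeeping consequence of Claim~\ref{claim}, Theorem~\ref{cy} and Theorem~\ref{thm:ferretti}. The real difficulty of the full Beauville--Voisin conjecture in codimension $3$---the missing relations flagged in the introduction---lies precisely in controlling products such as $D\cdot\alpha$ with $D\in\CH^1(X)^-$ and $\alpha\in\CH^2(X)^+$, which involve the primitive divisors; these products never occur in the present variant, which is exactly why dropping $\CH^1(X)^-$ renders the codimension-$3$ statement accessible.
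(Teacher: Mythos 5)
Your proposal is correct and follows essentially the same route as the paper: injectivity for $i=4$ is deduced from Theorem~\ref{thm:ferretti}, and for $i=3$ one notes that $c_2(X)$ (and $h^2$) lie in $\CH^2(X)^+$, so Claim~\ref{claim} collapses $\operatorname{R}^3(X)$ to $\QQ\,h^3$. Your added details (vanishing of odd Chern classes via $T_X\cong\Omega^1_X$, and the direct codimension-4 argument through $\CH^2(X)^+=f^*\CH^2(Y)$ and Theorem~\ref{cy}) merely make explicit what the paper's proofs of Theorems~\ref{thm:ferretti} and~\ref{thm:ferretti2} leave implicit.
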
	
	\begin{proof}
		In view of Theorem~\ref{thm:ferretti}, we only need to prove the injectivity
		for $i=3$. Since $c_2(X)$ is $\iota$-invariant, this follows readily from Claim~\ref{claim}.
	\end{proof}

	\subsection{Towards the Beauville--Voisin conjecture for double EPW sextics}
	Since \cite{Bogomolov} the cup-product map $\mathrm{Sym}^2H^2(X) \to H^4(X)$ is injective for all
	hyperK\"ahler varieties of dimension larger than $2$, the Beauville--Voisin conjecture holds
	in codimension 2 for any hyperK\"ahler variety $X$ of dimension~$>2$ for which $[c_2(X)]$ does not
	belong to the image of the restriction $\mathrm{Sym}^2\operatorname{NS}(X) \to H^4(X)$  of the above cup-product map  to the N\'eron--Severi group of $X$. This is in particular 
	the case for hyperK\"ahler varieties of dimension $>2$ that are deformation equivalent to Hilbert schemes
	of K3 surfaces or to generalized Kummer varieties. 	The following proposition
	thus shows
	that the Beauville--Voisin conjecture for double EPW sextics reduces to showing
	that 
	\begin{equation}\label{eq:missing}
	(\CH^1(X)^-)^{\cdot 3} \subseteq h^2\cdot \CH^1(X)^-.
	\end{equation}

	\begin{proposition} Let $X$ be a double EPW sextic. Then
		$c_2(X)\cdot \CH^1(X) = h^2 \cdot \CH^1(X)$.
	\end{proposition}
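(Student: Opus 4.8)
The plan is to compute both subspaces of $\CH^3(X)$ explicitly, using the eigenspace decomposition $\CH^1(X) = \QQ h \oplus \CH^1(X)^-$ together with the two relations of Ferretti that are already available. The starting point is relation~\eqref{eq:ferretti}, which gives $c_2(X) = 15h^2 - 3Z$ in $\CH^2(X)$, so that for any divisor $D$ one has
\[ c_2(X)\cdot D = 15\,h^2\cdot D - 3\,Z\cdot D \quad\text{in } \CH^3(X). \]
Thus the containment $c_2(X)\cdot\CH^1(X)\subseteq h^2\cdot\CH^1(X)$ will reduce to controlling the term $Z\cdot D$, and this is where the genuine geometric input enters.

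First I would treat $D\in\CH^1(X)^-$. Here the intersection $Z\cdot D$ vanishes by Lemma~\ref{lem:key} (this is the crucial step: the Lagrangian fixed surface $Z$ meets every anti-invariant divisor trivially), whence $c_2(X)\cdot D = 15\,h^2\cdot D$ lies in $h^2\cdot\CH^1(X)^-$. For the invariant part it suffices to treat $D=h$, and there one does not even need $Z$: relation~\eqref{eq:c2} gives $c_2(X)\cdot h = 5h^3 = 5\,h^2\cdot h$. Combining the two cases, I would conclude that
\[ c_2(X)\cdot\CH^1(X) = \QQ\,h^3 + h^2\cdot\CH^1(X)^- . \]

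Finally, for the reverse inclusion I would simply expand the right-hand side along the same decomposition, $h^2\cdot\CH^1(X) = h^2\cdot\QQ h + h^2\cdot\CH^1(X)^- = \QQ\,h^3 + h^2\cdot\CH^1(X)^-$, so that the two subspaces coincide. Since Lemma~\ref{lem:key} and Ferretti's relations~\eqref{eq:ferretti} and~\eqref{eq:c2} are already established, there is no serious obstacle; the only point deserving a moment's care is that $\QQ h^3$ really does sit inside $c_2(X)\cdot\CH^1(X)$, which holds precisely because the coefficient in~\eqref{eq:c2} is the nonzero number~$5$. This is the single place where a degeneration of one of the input relations could have broken the argument, and so is the step I would verify most attentively.
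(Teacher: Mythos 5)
Your proposal is correct and follows essentially the same route as the paper's own proof: decompose $\CH^1(X)$ into $\QQ h\oplus\CH^1(X)^-$, use Lemma~\ref{lem:key} together with Ferretti's relation~\eqref{eq:ferretti} to handle the anti-invariant part, and use relation~\eqref{eq:c2} for the invariant part. Your closing observation that the argument hinges on the coefficient $5$ in~\eqref{eq:c2} being nonzero is a valid (and implicit in the paper) point of care, but it does not change the substance of the proof.
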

	\begin{proof}
		Let $Z$ be the smooth Lagrangian surface which is the invariant locus of the
		involution $\iota : X\to X$.  By Lemma~\ref{lem:key}, we have $Z\cdot \CH^1(X)^-
		= 0$.
		Since $3Z = 15h^2 - c_2(X)$ in $\CH^2(X)$ (see~\eqref{eq:ferretti}), we have
		$c_2(X)\cdot \CH^1(X)^- = h^2\cdot \CH^1(X)^- $. Finally, the relation
		\eqref{eq:c2} concludes the proof of the proposition.
	\end{proof}
	
	Finally, let us mention that, due to Theorem~\ref{thm:ferretti} and precisely
	to the fact that $(\CH^1(X)^-)^{\cdot 4}$ injects in cohomology, it is likely
	that the recent result of Voisin \cite[Theorem 0.3]{VT} applies to double EPW
	sextics\,; this would imply that $(\CH^1(X)^-)^{\cdot 3}$ injects in cohomology.
	Although this would provide new information concerning the Beauville--Voisin
	conjecture in codimension 3 for double EPW sextics, it is however not clear how
	to establish the missing relation \eqref{eq:missing}. 
	
	\section{Hilbert squares of K3 surfaces with an involution}
	
	\subsection{MCK decomposition}
	\label{ssmck}
	Multiplicative Chow--K\"unneth decompositions were introduced in \cite[\S
	8]{SV} as a motivic way to provide an explicit candidate for Beauville's
	conjectural splitting of the conjectural Bloch--Beilinson filtration on the Chow
	rings of hyperK\"ahler varieties.
	First, we recall what a Chow--K\"unneth decomposition is.
	
	\begin{definition}[Murre \cite{Mur}] 
		Let $X$ be a smooth projective variety of
		dimension $n$. We say that $X$ has a {\em Chow--K\"unneth decomposition\/} (CK
		decomposition for short) if there exists a decomposition of the diagonal
		\[ \Delta_X= \Pi^0_X+ \Pi^1_X+\cdots +\Pi^{2n}_X\ \ \ \hbox{in}\ \CH^n(X\times
		X)\ ,\]
		such that the $\Pi^i_X$ are mutually orthogonal idempotents and
		$(\Pi^i_X)_\ast H^\ast(X)= H^i(X)$.
	\end{definition}
	
	Assuming the Bloch--Beilinson conjectures, Jannsen \cite{J2} proved that all
	smooth projective varieties admit a CK decomposition and moreover the CK
	projectors $\Pi^i_X$ induce a splitting of the Bloch--Beilinson filtration on
	the Chow \emph{groups}. A sufficient condition for the induced splitting to be
	compatible with the ring structure is given by the following definition.
	
	\begin{definition}[Shen--Vial \cite{SV}] 
		Let $X$ be a smooth projective variety
		of dimension $n$. Let $\delta_X \in \CH^{2n}(X\times X\times X)$ be the class
		of
		the small diagonal
		\[ \delta_X:=\bigl\{ (x,x,x)\ \vert\ x\in X\bigr\}\ \subset\ X\times X\times
		X\ .\]
		A \emph{multiplicative Chow--K\"unneth decomposition} (MCK decomposition for
		short) is a CK decomposition $\{\Pi^i_X\}$ of $X$ that is {\em
			multiplicative\/}, \emph{i.e.} that satisfies
		\[ \Pi^k_X\circ \delta_X \circ (\Pi^i_X\times \Pi^j_X)=0\ \ \ \hbox{in}\
		\CH^{2n}(X\times X\times X)\ \ \ \hbox{for\ all\ }i+j\not=k\ .\]
		An MCK decomposition is necessarily {\em self-dual\/}, \emph{i.e.} it
		satisfies
		$\Pi^k_X={}^t \Pi^{2n-k}_X$ for all $k$, where the superscript $t$ indicates
		the transpose correspondence\,; see \cite[footnote 24]{FV}.
	\end{definition}
	
	From the definition,
	it follows that if $X$ has an MCK decomposition $\{\Pi^i_X\}$, then setting
	\[ \CH^i_{(j)}(X):= (\Pi_X^{2i-j})_\ast \CH^i(X) \ ,\]
	one obtains a bigraded ring structure on the Chow ring\,: that is, the
	intersection product sends 
	$\CH^i_{(j)}(X)\otimes \CH^{i^\prime}_{(j^\prime)}(X) $ to 
	$\CH^{i+i^\prime}_{(j+j^\prime)}(X)$. In other words, an MCK decomposition induces a splitting
	of the conjectural Bloch--Beilinson filtration on the Chow ring of $X$.\medskip
	
	While the existence of a CK decomposition for any smooth projective variety is
	expected (either as part of the Bloch--Beilinson conjectures, or as part of
	Murre's conjectures \cite{Mur, J2}), the property of having
	an MCK decomposition is severely restrictive\,; for example, a very general
	curve of genus $\geq 3$ does not admit an MCK decomposition (although the
	conjectural BB filtration on the Chow ring of curves splits). The existence of
	an
	MCK decomposition is closely related to Beauville's ``weak splitting property''
	\cite{Beau3}, and it is conjectured in \cite[Conjecture~4]{SV} that
	hyperK\"ahler varieties should admit an MCK. 
	The seminal work of Beauville--Voisin \cite{BV} establishes for K3 surfaces $S$
	the existence of a canonical zero-cycle $o\in \CH^2(S)$ of degree 1 that
	``decomposes'' the small diagonal in $S\times S\times S$.
	By \cite[Proposition~8.14]{SV}, this can be reinterpreted as saying that the
	Chow--K\"unneth decomposition defined by $\Pi_S^0 := o\times S$, $\Pi_S^4 :=
	S\times o$ and $\Pi_S^2 = \Delta_S - \Pi_S^0 - \Pi_S^4$ is multiplicative.
	Beyond the case of K3 surfaces, the MCK conjecture for hyperK\"ahler varieties
	has been established
	for Hilbert squares of K3 surfaces in \cite{SV}, more generally for Hilbert
	schemes of length-$n$ subschemes on K3 surfaces in \cite{V6}, and for
	generalized
	Kummer varieties in \cite{FTV}. Other examples of varieties admitting an MCK
	can
	be found in \cite{SV2} and \cite{LV}. For more ample discussion and examples of
	varieties with an MCK decomposition, we refer to \cite[Section 8]{SV} and also
	\cite{V6}, \cite{SV2}, \cite{FTV}, \cite{LV}.

	\subsection{The Chow rings of birational hyperK\"ahler varieties} Consider two
	birational hyperK\"ahler varieties. We recall a result of Rie\ss~ showing the
	existence of a correspondence inducing an isomorphism between their Chow rings.
	
	\begin{theorem}[Rie\ss~\cite{Rie}]\label{rie} Let $\phi\colon X\dashrightarrow
		X^\prime$ be a birational map between two hyperK\"ahler varieties of dimension
		$n$. 
		\begin{enumerate}[(i)]
			\item  There exists a correspondence $R_\phi\in \CH^n(X\times X^\prime)$ such
			that 
			$(R_\phi)_\ast \colon \CH^\ast(X) \to \CH^\ast(X^\prime)$
			is a graded ring isomorphism.
			\item For any $j$, there is equality
			$(R_\phi)_\ast c_j(X)= c_j(X^\prime)$ in $\CH^j(X^\prime)$.
			\item There is equality
		    \[ (\Gamma_R)_\ast=(\bar{\Gamma}_\phi)_\ast\colon\ \ \CH^j(X)\ \to\
		\CH^j(X^\prime)\ \ \ \hbox{for}\ j=0,1, n-1, n\ \]
		    (where $\bar{\Gamma}_\phi$ denotes the closure of the graph of $\phi$). 		
		\end{enumerate}
	\end{theorem}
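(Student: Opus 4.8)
The statement is due to Rie\ss~\cite{Rie}; the plan I would follow rests on Huybrechts' theorem that birational hyperK\"ahler varieties are deformation equivalent through a family realizing $\phi$ as a degeneration of isomorphisms. Concretely, first I would produce a smooth pointed curve $(D,0)$, smooth projective families $\pi\colon\mathcal X\to D$ and $\pi'\colon\mathcal X'\to D$ with $\mathcal X_0\cong X$ and $\mathcal X'_0\cong X'$, and a birational map $F\colon\mathcal X\dashrightarrow\mathcal X'$ over $D$ restricting to $\phi$ over $0$ and to an isomorphism $F_t\colon\mathcal X_t\to\mathcal X'_t$ for every $t\ne 0$. Letting $\overline{\Gamma_F}\subset\mathcal X\times_D\mathcal X'$ be the closure of the graph of $F$, I would then \emph{define} $R_\phi:=\overline{\Gamma_F}\vert_0\in\CH^n(X\times X')$ to be its Gysin fiber over the Cartier divisor $\{0\}\hookrightarrow D$.

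To prove $(\rom1)$ I would argue by specialization. For $t\ne 0$ the class $\overline{\Gamma_F}\vert_t$ is the graph of the isomorphism $F_t$, so $(\overline{\Gamma_F}\vert_t)_\ast$ is a graded ring isomorphism with inverse $(\overline{\Gamma_{F^{-1}}}\vert_t)_\ast$. The identities expressing this — compatibility with the small diagonals $\delta_{\mathcal X_t}$ and $\delta_{\mathcal X'_t}$ (multiplicativity), preservation of the grading, and the two relations $R_\phi\circ R_{\phi^{-1}}=\Delta_{X'}$ and $R_{\phi^{-1}}\circ R_\phi=\Delta_X$ — are equalities between Gysin fibers of relative cycle classes defined over all of $D$. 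Since they hold over the dense open $D\setminus\{0\}$ and since the Gysin-fiber operation (refined pull-back along $\{0\}\hookrightarrow D$) commutes with proper push-forward, intersection products and composition of correspondences, I would conclude that they persist over $0$; hence $(R_\phi)_\ast\colon\CH^\ast(X)\to\CH^\ast(X')$ is a graded ring isomorphism. For $(\rom2)$, I would note that $(F_t)_\ast c_j(\mathcal X_t)=c_j(\mathcal X'_t)$ for $t\ne 0$ because $F_t$ is an isomorphism, and specialize this identity over $0$ in the same way to obtain $(R_\phi)_\ast c_j(X)=c_j(X')$.

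For $(\rom3)$ I would compare $R_\phi$ with the naive graph closure $\overline{\Gamma_\phi}$ (the closure of the graph of $\phi$). Since $X$ and $X'$ are smooth with trivial canonical class, $\phi$ is an isomorphism in codimension one, so both indeterminacy loci have codimension $\ge 2$; thus $R_\phi-\overline{\Gamma_\phi}$ is supported on $(V\times X')\cup(X\times V')$ with $\codim_X V\ge 2$ and $\codim_{X'}V'\ge 2$. A correspondence supported on such a locus kills $\CH^0$ and $\CH^1$ (a divisor class has a representative avoiding $V$), and by passing to transposes it also kills $\CH^{n-1}$ and $\CH^n$; therefore $(R_\phi)_\ast=(\overline{\Gamma_\phi})_\ast$ on $\CH^j(X)$ for $j\in\{0,1,n-1,n\}$.

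The hard part is the specialization step in $(\rom1)$. Because the fiber products $\mathcal X\times_D\mathcal X$ and $\mathcal X\times_D\mathcal X'$ need not be smooth, the argument must be carried out with Fulton's refined Gysin formalism and the $\CH_\ast$-conventions of \cite{F}, and one must verify carefully that taking the Gysin fiber over $0$ commutes with composition of correspondences. Establishing this compatibility — so that the ring-homomorphism identities valid over $D\setminus\{0\}$ genuinely descend to the special fiber — is the technical heart of the proof.
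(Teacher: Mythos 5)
Your construction of $R_\phi$ as the specialization (Gysin fiber at $0$) of the graph closure over a Huybrechts--Rie\ss\ degeneration, and your sketches of parts $(\rom1)$ and $(\rom2)$, follow the same route as Rie\ss's paper; the paper itself simply cites \cite{Rie} for these two items, so the only part it actually proves is $(\rom3)$, and that is where your argument has genuine gaps. Your support claim --- that $R_\phi-\bar{\Gamma}_\phi$ is supported on $(V\times X')\cup(X\times V')$ with $V,V'$ of codimension $\ge 2$ --- is asserted, not proved: knowing that the indeterminacy loci have codimension $\ge 2$ says nothing by itself about where the specialized class $R_\phi$ differs from $\bar{\Gamma}_\phi$. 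The paper establishes a stronger, \emph{one-sided} statement: using complete intersections of hypersurfaces one builds a closed subset $\TT\subset\XX$ of codimension $2$ over the \emph{whole} curve with $\TT_0\supseteq T$ (the non-isomorphism locus of $\phi$), so that $\Psi$ restricts to an isomorphism on $\XX_0\setminus\TT_0$; since specialization commutes with restriction to open subsets, $R_\phi$ restricted to $(X\setminus\TT_0)\times X'$ equals the graph of $\phi$ there, whence $R_\phi=\bar{\Gamma}_\phi+\gamma$ with $\gamma$ supported on $\TT_0\times X'$ \emph{only}. This construction and commutation argument is the real content of $(\rom3)$ and is missing from your proposal.

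More seriously, the deduction you draw from your two-sided support claim is false as a general statement. A correspondence supported on $(V\times X')\cup(X\times V')$ need not annihilate $\CH^0$, $\CH^1$, $\CH^{n-1}$, $\CH^n$: for instance, if $X$ carries a Lagrangian fibration $\pi\colon X\to\PP^2$ and $X'$ contains a surface $V'\cong\PP^2$, the image of $\Gamma_\pi$ in $X\times X'$ is an $n$-dimensional cycle supported on $X\times V'$ whose covariant action sends the class of a point of $X$ to the nonzero class of a point of $V'$, so the component on $X\times V'$ does not die on $\CH^n(X)$. Your parenthetical justification is also backwards: a divisor \emph{cannot} be moved off a codimension-$2$ subset (their intersection is generically nonempty); it is $0$-cycles and $1$-cycles that can. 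The paper's argument exploits the one-sided support asymmetrically: $\gamma$ supported on $\TT_0\times X'$ kills $\CH^{n-1}(X)$ and $\CH^n(X)$ covariantly because $0$- and $1$-cycles move off $\TT_0$; for $\CH^0$ and $\CH^1$ one instead uses ${}^tR_\phi=R_{\phi^{-1}}$ together with the analogous decomposition $R_{\phi^{-1}}=\bar{\Gamma}_{\phi^{-1}}+\gamma'$ with $\gamma'$ supported on $\TT_0'\times X$, whose \emph{contravariant} action on $\CH^j(X)$ produces classes supported on the codimension-$2$ subset $\TT_0'\subset X'$, hence zero for $j\le 1$ for dimension reasons. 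Without the one-sided support and this transpose/role-swapping structure, the conclusion in all four degrees does not follow. Two minor points: the families produced by Huybrechts--Rie\ss\ are algebraic spaces and cannot in general be taken projective over the curve (the paper notes Fulton's specialization must be extended to algebraic spaces); and since those families are smooth over the base, the fiber products $\XX\times_C\XX'$ are in fact smooth --- the technical difficulty is non-projectivity, not the non-smoothness you flag.
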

	
	\begin{proof} Item $(\rom1)$ is \cite[Theorem 3.2]{Rie}, while item $(\rom2)$ is
		\cite[Lemma 4.4]{Rie}.
		
		In a nutshell, the construction of the correspondence $R_\phi$ is as follows\,: \cite[Section 2]{Rie} provides a diagram 
		$$\xymatrix{\XX \ar[dr] \ar@{-->}[rr]^{\Psi} & & \XX' \ar[dl] \\
			& C
		}$$
	where $\XX, \XX^\prime$ are algebraic spaces over a quasi-projective
	curve $C$ such that the fibers $\XX_0, \XX^\prime_0$ are isomorphic to $X$
	resp. $X^\prime$, and where $\Psi\colon \XX
	\dashrightarrow \XX^\prime$ is a birational map inducing an isomorphism $\XX_{C\setminus 0}\cong  
	\XX^\prime_{C\setminus 0}$
	 and whose  restriction $\Psi\vert_{\XX_0}$
	coincides with $\phi$.	
	The correspondence $R_\phi$ is then defined as the specialization (in the sense
	of \cite{F}, extended to algebraic spaces) of the graphs of the isomorphisms
	$\XX_c\cong\XX^\prime_c$, $c\not=0$.      

Point $(\rom3)$ is not stated explicitly in \cite{Rie}, and can be seen as follows.
Let $U\subset X$ be the locus on which $\phi$ induces an isomorphism. The
		complement $T:=X\setminus U$ has codimension $\ge 2$. Using complete
		intersections of hypersurfaces, one can find a closed subset $\TT\subset\XX$ of
		codimension $2$ such that $\TT_0$ contains $T$, \emph{i.e.} $\Psi$ restricts to an
		isomorphism on $V:=\XX_0\setminus \TT_0$.  
		Since specialization commutes with pullback, the restriction of $R_\phi$ to
		$V\times X^\prime$ is the specialization of the graphs of the morphisms
		$\XX_c\setminus\TT_c\to \XX^\prime_c$, $c\not=0$ to $V\times X^\prime$, which
		is exactly the graph of the morphism $\phi\vert_V$, {\it i.e.}
		\[ R_\phi\vert_{V\times X\prime}= \Gamma_{(\phi\vert_V)}= \bar{\Gamma}_\phi
		\vert_{V\times X^\prime}\ \ \hbox{in}\ \CH^n(V\times X^\prime)\ .\]
		It follows that one has
		\[ R_\phi = \bar{\Gamma}_\phi + \gamma\ \ \ \hbox{in}\  \CH^n(X\times
		X^\prime)\ ,\]
		where $\gamma$ is some cycle supported on $\TT_0\times X^\prime$. This proves
		the ``moreover'' statement\,: $0$-cycles and $1$-cycles on $X$ can be moved to be
		disjoint of $\TT_0$, and so $\gamma_\ast$ acts as zero on $\CH^j(X)$ for $j\ge
		n-1$. Likewise, $\gamma^\ast$ acts as zero on $\CH^j(X^\prime)$ for $j\le 1$ for
		dimension reasons, and so (by inverting the roles of $X$ and $X^\prime$)
		statement $(\rom3)$ is proven. 
	\end{proof}

	\begin{remark}\label{birat} As observed in \cite[Introduction]{V6}, the
		correspondence $R_\phi$ of Theorem~\ref{rie} actually induces an isomorphism of
		Chow motives as $\QQ$-algebra objects. This implies that the property ``having
		an MCK decomposition'' is birationally invariant among hyperK\"ahler varieties.
	\end{remark}

	\subsection{MCK for $K3^{[2]}$}
	\label{secmck}
	
	Let $S$ be a K3 surface. We denote $S^{[2]}$ the Hilbert scheme of length-2
	subschemes of $S$ and  $Z:=\{(\zeta,x) \in S^{[2]}\times S : x\in
	\mathrm{Supp}(\zeta)\}$ the corresponding universal family. The former is
	naturally the quotient of the blow-up $\widetilde{S\times S}$ of $S\times S$
	along the diagonal under the natural involution switching the factors, while
	the
	latter comes equipped with two projection maps\,:
	$$ \xymatrix{ Z \ar[r]^{p \ \  } \ar[d]_q & S^{[2]} \ .\\
		S & }$$
	Recall that every divisor class on $S^{[2]}$ is of the form $p_*q^*D_S +
	a\delta$ for some divisor class $D_S$ on $S$ and some integer $a$, where
	$\delta$ denotes half of the image of the exceptional divisor under the
	quotient
	morphism $\widetilde{S\times S} \to S^{[2]}$.

	For a closed point $x \in S$,  $S_x := p(q^{-1}(x))$ defines a smooth
	subvariety of $S^{[2]}$ (which canonically identifies with the blow-up of $S$
	at
	$x$) and its class in $\CH^2(S^{[2]})$ is $Z_*[x]$. For two distinct points
	$x, y \in S$,  $[x, y]$  denotes the point of $S^{[2]}$ that corresponds to the
	subscheme $\{x, y\} \subset S$.
	When $x = y$, $[x, x]$ denotes the element in $\CH^4(S^{[2]})$ represented by
	any
	point corresponding to
	a nonreduced subscheme of length $2$ of $S$ supported at $x$. As cycles, we
	have $S_x \cdot S_y = [x, y]$.
	\medskip
	
	The following statement summarizes the results concerning the Chow ring of
	hyperK\"ahler varieties birational to the Hilbert square of a K3 surface that
	will be needed for the proofs of Theorems \ref{main0} and
	\ref{main}.
	
	\begin{theorem}[Shen--Vial \cite{SV}]\label{mck} 
		Let $S$ be a K3 surface, and
		let $X$ be a hyperK\"ahler fourfold birational to the Hilbert square
		$S^{[2]}$.
		Then $X$ admits a self-dual MCK decomposition such that the  induced bigraded
		ring structure $\CH^\ast_{(\ast)}(X)$ on $\CH^\ast(X)$ coincides with the
		bigrading
		on $\CH^\ast(X)$ defined by the ``Fourier transform'' of \cite{SV} and enjoys
		the
		following properties\,:
		\begin{enumerate}[(i)]
			\item \label{chern} $c_j(X)\in \CH^{j}_{(0)}(X)$ for all $j$\,;
			\item \label{hard} The multiplication map
			$\cdot D^2\colon \CH^2_{(2)}(X) \to \CH^4_{(2)}(X)$
			is an isomorphism for any choice of divisor $D\in \CH^1(X)$ with
			$\deg(D^4)\neq
			0$\,;
			\item \label{prod}
			The intersection product map
			$\CH^2_{(2)}(X)\otimes \CH^2_{(2)}(X) \to \CH^4_{(4)}(X)$
			is surjective.
		\end{enumerate}	
		Moreover, with respect to the choice of any  birational map
		$X\stackrel{\sim}{\dashrightarrow} S^{[2]}$,
		the bigraded pieces of the Chow groups have the following explicit
		descriptions\,:
		\begin{itemize}
			\item $\CH^0(X) = \CH^0_{(0)}(X) = \QQ [X]$\,;
			\item $\CH^1(X) = \CH^1_{(0)}(X)$ injects in cohomology \emph{via} the cycle
			class map\,;
			\item $\CH^2(X) = \CH^2_{(0)}(X) \oplus \CH^2_{(2)}(X)$, where $\CH^2_{(2)}(X) =
			\langle
			S_x - S_y : x,y\in S \rangle$\,;
			\item $\CH^3(X) = \CH^3_{(0)}(X) \oplus \CH^3_{(2)}(X)$, $\CH^3_{(2)}(X) =
			\CH^3_{hom}(X)$\,;
			\item $\CH^4(X) = \CH^4_{(0)}(X) \oplus \CH^4_{(2)}(X) \oplus \CH^4_{(4)}(X)$,
			where
			$\CH^4_{(0)}(X) = \QQ [o,o]$, $\CH^4_{(2)}(X) = \langle [x,o] - [y,o] :  x,y
			\in
			S\rangle$ and $\CH^4_{(4)}(X) = \langle [x,y] - [x,o] - [y,o] + [o,o]:  x,y
			\in
			S\rangle$.
		\end{itemize}
		Here, $o$ denotes any point lying on a rational curve on $S$.
	\end{theorem}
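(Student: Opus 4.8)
The plan is to deduce all assertions from the case of the genuine Hilbert square $X=S^{[2]}$, where they are established in \cite{SV}, and then to transport the whole structure along the birational correspondence furnished by Theorem~\ref{rie}. Thus the argument splits into two stages: an appeal to the Shen--Vial construction on $S^{[2]}$, and a formal birational transport to an arbitrary hyperK\"ahler fourfold $X$ birational to it.

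First I would treat $X=S^{[2]}$ directly. Here the existence of a self-dual MCK decomposition, and the identification of the associated bigrading with the one coming from the ``Fourier transform'', is precisely the content of \cite[\S\,2 and \S\,8]{SV}: Shen and Vial build a Fourier kernel $L\in\CH^2(S^{[2]}\times S^{[2]})$ from the incidence correspondence $Z$ and the Beauville--Bogomolov class, prove that the resulting eigenspace decomposition is a Chow--K\"unneth decomposition, and verify that it is both multiplicative and self-dual. Properties (i), (ii) and (iii), together with the explicit descriptions of the graded pieces $\CH^i_{(j)}(S^{[2]})$ in terms of $o$, the surfaces $S_x$ and the zero-cycles $[x,y]$, are then read off from their computation of the decomposition; in particular one obtains $c_j(S^{[2]})\in\CH^j_{(0)}$, the identification $\CH^2_{(2)}=\langle S_x-S_y\rangle$, and the three-term splitting of $\CH^4$ with the stated generators.

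Next I would pass to a general $X$ birational to $S^{[2]}$ \emph{via} some $\phi\colon X\dashrightarrow S^{[2]}$. By Remark~\ref{birat}, the correspondence $R_\phi$ of Theorem~\ref{rie} induces an isomorphism of Chow motives as $\QQ$-algebra objects; conjugating the projectors $\Pi^i_{S^{[2]}}$ through $R_\phi$ therefore yields a Chow--K\"unneth decomposition of $X$ that is again multiplicative, since $R_\phi$ respects intersection products, and self-dual, since $R_\phi$ is compatible with the duality built into the motivic algebra isomorphism. This transported decomposition defines the bigrading $\CH^\ast_{(\ast)}(X)$, and because $(R_\phi)_\ast$ is a graded ring isomorphism, properties (ii) and (iii) follow immediately from their counterparts on $S^{[2]}$. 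Property (i) is immediate from Theorem~\ref{rie}\,(ii), which gives $(R_\phi)_\ast c_j(X)=c_j(S^{[2]})$, placing $c_j(X)$ in $\CH^j_{(0)}(X)$. The explicit descriptions of the pieces of $\CH^\ast(X)$ are then obtained by pulling the Shen--Vial generators back along $R_\phi$; here I would invoke Theorem~\ref{rie}\,(iii), which ensures that $R_\phi$ acts on zero-cycles and on divisors as the closure of the graph of $\phi$, so that the transported classes retain their geometric meaning and the descriptions hold with respect to the chosen birational map.

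The only genuinely new ingredient is the birational transport, and the one point that needs care is that the transported decomposition remains \emph{both} multiplicative and self-dual; this is exactly what the algebra-object statement of Remark~\ref{birat} secures, together with the degree-$n$ normalization making $R_\phi$ compatible with transposition. The substantive work, namely the construction and explicit computation of the Fourier/MCK decomposition on $S^{[2]}$ itself, is due entirely to Shen--Vial and is used here as a black box.
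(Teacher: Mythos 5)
Your overall strategy coincides with the paper's: reduce to $X=S^{[2]}$ via Rie\ss's correspondence (Theorem~\ref{rie} together with Remark~\ref{birat}, which gives an isomorphism of Chow motives as $\QQ$-algebra objects sending Chern classes to Chern classes), and then invoke Shen--Vial for the Hilbert square itself. Two remarks on your transport step: the worry about self-duality is moot, since an MCK decomposition is \emph{automatically} self-dual (\cite[footnote 24]{FV}, as recalled in the paper's definition of MCK), so no ``degree-$n$ normalization compatible with transposition'' needs to be verified; and you are right that the explicit descriptions of the pieces of $\CH^4(X)$ require Theorem~\ref{rie}(iii), so that the correspondence acts on zero-cycles like the closure of the graph of $\phi$.

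The genuine gap is your claim that property (ii) can simply be ``read off'' from \cite{SV}. It cannot: Shen--Vial do not prove the hard-Lefschetz-type statement that $\cdot D^2\colon \CH^2_{(2)}(X)\to\CH^4_{(2)}(X)$ is an isomorphism for an \emph{arbitrary} divisor $D$ with $\deg(D^4)\neq 0$, and this is exactly the point where the paper must supply an argument beyond citation. Writing $D=p_*q^*D_S+a\delta$, one computes, following the proof of \cite[Proposition~12.8]{SV}, that $D^2\cdot S_x=-a^2[x,x]+\deg(D_S^2)\,[x,o]$; combining this with $[x,x]=2[x,o]-[o,o]$ and with $q(D)=\deg(D_S^2)-2a^2$ yields $D^2\cdot(S_x-S_y)=q(D)\bigl([x,o]-[y,o]\bigr)$, and the Fujiki relation $q(D)^2=\lambda\deg(D^4)$ with $\lambda\neq 0$ then shows $q(D)\neq 0$, whence the isomorphism on the stated generators. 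Without this computation, (ii) is an unsupported assertion in your write-up; and the generality matters, since (ii) is later applied (in the proof of Theorem~\ref{main}) to the EPW polarization $h$, not to a divisor for which \cite{SV} could be quoted directly.
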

	
	\begin{proof}
		By the result of Rie\ss\  (Theorem~\ref{rie}), birational hyperK\"ahler
		varieties
		have isomorphic Chow motives as algebra objects (and Chern classes are sent to
		Chern classes). It follows that the proof of the theorem 
		reduces to the case of
		$X=S^{[2]}$\,; see \emph{e.g.} \cite[Section~6]{SV} and
		\cite[Introduction]{V6}.
		
		We consider the MCK on $X$ constructed in \cite[Theorem 13.4]{SV}\,; its
		relation with the Fourier transform is \cite[Theorem 15.8]{SV}. Statement~\eqref{chern}
		about the Chern classes is \cite[Lemma~13.7(iv)]{SV}, while statement~\eqref{prod} is
		\cite[Proposition~15.6]{SV}.
		The explicit description of the Chow groups is the combination of
		\cite[Theorem~2]{SV}, \cite[Proposition~15.6]{SV} and
		\cite[Proposition~12.9]{SV}.
		
		It remains to check \eqref{hard}. Let $D \in \CH^1(S^{[2]})$. Then $D =
		p_*q^*D_S + a\delta$, where $a\in \QQ$ and $D_S\in
		\CH^1(S)$. As in the proof of \cite[Proposition~12.8]{SV}, one computes
		$D^2\cdot
		S_x = -a^2[x,x] + \deg(D_S^2) [x,o]$. Combined with the fact that $[x,x] =
		2[x,o]- [o,o]$ (\cite[Proposition~12.6]{SV}) and the fact that $q(D) =
		\deg(D_S^2)  - 2a^2$, one finds 
		$$D^2\cdot (S_x -S_y) = q(D)([x,o]-[y,o]).$$
		Now if $D$ is such that  $\deg(D^4)\neq 0$, since $q(D)^2 = \lambda \deg(D^4)$
		with $\lambda$ the Fujiki--Beauville--Bogomolov constant (which is non-zero),
		we
		see that intersecting with $D^2$ induces an isomorphism  $\CH^2_{(2)}(X)\
		\stackrel{\sim}{\longrightarrow}\ \CH^4_{(2)}(X) $.
	\end{proof}

	\subsection{Proof of Theorem~\ref{main0}}
	
	Before proving Theorem~\ref{main0}, we first prove the following statement,
	which
	determines the action of $\iota$ on the relevant pieces of the Fourier
	decomposition $\CH^\ast_{(\ast)}(X)$.

	\begin{theorem}\label{main} Let $X$ be a smooth double EPW sextic, and assume
		that $X$ is birational to a Hilbert square $S^{[2]}$ with $S$ a K3 surface.
		Let
		$\iota\in\aut(X)$ be the anti-symplectic involution coming from the double
		cover
		$f \colon X\to Y$, where $Y\subset\PP^5$ is an EPW sextic.
		Then $\iota^*$ commutes with the Fourier decomposition, \emph{i.e.} $\iota^*$ 
		respects the grading of $\CH^i_{(*)}(X)$ given by Theorem~\ref{mck} for all
		$i$.
		Moreover, we have $\CH^1(X)^+ = \QQ [h]$ and $\CH^3_{(0)}(X)^+ = \QQ [h^3]$,
		together with
		\[    \begin{split}         \iota^\ast=\ide\colon&\ \ \ \CH^4_{(0)}(X)\ \to\
		\CH^4_{(0)}(X)\ ;\\      
		\iota^\ast =-\ide\colon&\ \ \ \CH^4_{(2)}(X)\
		\to\ \CH^4_{(2)}(X)\ ;\\
		\iota^\ast =\ide\colon&\ \ \ \CH^4_{(4)}(X)\
		\to\ \CH^4_{(4)}(X)\ ;\\
		\iota^\ast=-\ide \colon& \ \ \ \CH^2_{(2)}(X)\ \to\
		\CH^2_{(2)}(X)\ .\\           
		\end{split}\]
		
	\end{theorem}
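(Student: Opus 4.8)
The plan is to isolate a single Bloch-type statement — that $\iota^\ast$ acts as $-\ide$ on $\CH^2_{(2)}(X)$ — and to derive everything else from it through the multiplicativity of the Fourier decomposition in Theorem~\ref{mck}. Throughout I would use that $\iota^\ast h=h$, that the $\iota$-invariant part of each Chow group is $\CH^\ast(X)^+=f^\ast\CH^\ast(Y)$ (since $f^\ast f_\ast=\ide+\iota^\ast$ with $\QQ$-coefficients), that $\iota^\ast=\iota_\ast$, and that the cycle class map is $\iota$-equivariant. Two pieces are then immediate. On $\CH^4_{(0)}(X)=\QQ[o,o]$ one has $\iota^\ast=\ide$, because $[o,o]$ is the class of a point lying on a rational curve and $\iota$ carries rational curves to rational curves, so $\iota_\ast[o,o]=[o,o]$. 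And $\CH^1(X)^+=f^\ast\CH^1(Y)=\QQ[h]$, since an EPW sextic has $b_2=1$.

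The crux is $\iota^\ast=-\ide$ on $\CH^2_{(2)}(X)$. First I would record that $\CH^2_{(2)}(X)=\CH^2_{hom}(X)$ (which holds since the degree-zero piece $\CH^2_{(0)}(X)$ injects into cohomology), so this space is automatically $\iota$-stable and it suffices to prove $\CH^2_{(2)}(X)^+=0$. Given $\alpha\in\CH^2_{(2)}(X)^+=\CH^2_{(2)}(X)\cap f^\ast\CH^2(Y)$, write $\alpha=f^\ast\beta$ with $\beta\in\CH^2(Y)$, and set $h=f^\ast h_Y$ with $h_Y=c_1(\OO_Y(1))$. By the projection formula $h^2\cdot\alpha=f^\ast(h_Y^2\cdot\beta)$; by Remark~\ref{rmk:h} the intersection $h_Y^2\cdot\beta$ is a multiple of $h_Y^4$ on $Y$, so $h^2\cdot\alpha$ is a multiple of $h^4$. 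Since $\alpha$ is homologically trivial whereas $h^4$ is not, that multiple vanishes, i.e. $h^2\cdot\alpha=0$; as $\deg(h^4)\neq0$, Theorem~\ref{mck}\eqref{hard} then forces $\alpha=0$. I expect this to be the main obstacle: it is exactly the point where rational equivalence must be detected beyond cohomology, and the double-cover structure combined with Remark~\ref{rmk:h} is what makes it tractable.

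From the sign on $\CH^2_{(2)}(X)$ the rest of codimension $4$ is formal. Theorem~\ref{mck}\eqref{hard} gives $\CH^4_{(2)}(X)=h^2\cdot\CH^2_{(2)}(X)$, so $\iota^\ast(h^2\cdot\gamma)=h^2\cdot(-\gamma)$ shows both that $\CH^4_{(2)}(X)$ is $\iota$-stable and that $\iota^\ast=-\ide$ there; Theorem~\ref{mck}\eqref{prod} writes every class in $\CH^4_{(4)}(X)$ as a sum of products of two elements of $\CH^2_{(2)}(X)$, on which $\iota^\ast$ acts by $(-1)(-1)=+1$, giving $\iota^\ast=\ide$ on $\CH^4_{(4)}(X)$. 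Together with $\iota^\ast=\ide$ on $\CH^4_{(0)}(X)$ this yields the three displayed signs and grading compatibility in codimension $4$. For $\CH^3_{(0)}(X)^+=\QQ[h^3]$ I would use that $\CH^3_{(0)}(X)$ injects $\iota$-equivariantly into $H^6(X)$, so its invariants inject into $H^6(X)^+=f^\ast H^6(Y)$; as $Y$ is a quotient variety with $b_2(Y)=1$, Poincar\'e duality gives $b_6(Y)=1$, whence $H^6(X)^+=\QQ[h^3]$, which already contains $h^3$.

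It remains to check that $\iota^\ast$ respects the bigrading in the lower codimensions, which I regard as the formal part. The homologically trivial pieces $\CH^2_{(2)}(X)=\CH^2_{hom}(X)$ and $\CH^3_{(2)}(X)=\CH^3_{hom}(X)$ are $\iota$-stable for trivial reasons, so only the complementary $(0)$-parts require attention; in these codimensions the $(0)$-part is tautological, generated by divisor classes and Chern classes, and hence preserved by $\iota^\ast$, which fixes each $c_j(X)$ and permutes $\CH^1(X)$. (Alternatively one argues that the Fourier kernel $\mathcal{L}\in\CH^2(X\times X)$ is $(\iota\times\iota)$-invariant, $\iota$ being an isometry of the Beauville--Bogomolov form, so that $\iota^\ast$ commutes with every projector of the decomposition.) As a final consistency check, decomposing a generator along $\CH^4_{(4)}(X)\oplus\CH^4_{(2)}(X)\oplus\CH^4_{(0)}(X)$ as
\[ [x,y]=\bigl([x,y]-[x,o]-[y,o]+[o,o]\bigr)+\bigl([x,o]-[o,o]\bigr)+\bigl([y,o]-[o,o]\bigr)+[o,o] \]
and applying the signs $+,-,-,+$ recovers $\iota^\ast[x,y]=[x,y]-2[x,o]-2[y,o]+4[o,o]$, the formula of Theorem~\ref{main0}.
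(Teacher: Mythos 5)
Your overall skeleton (prove $\iota^\ast=-\ide$ on $\CH^2_{(2)}(X)$, then propagate through multiplicativity) matches the paper, and your central computation is sound; but the proof has a genuine gap exactly at the step you call the crux. The vanishing $\CH^2_{(2)}(X)\cap\CH^2(X)^+=0$ (which you prove correctly, by the same projection-formula-plus-Remark~\ref{rmk:h} computation the paper uses) yields $\iota^\ast=-\ide$ on $\CH^2_{(2)}(X)$ only if you already know that $\alpha+\iota^\ast\alpha$ lies in $\CH^2_{(2)}(X)$, \emph{i.e.} that $\iota^\ast$ preserves $\CH^2_{(2)}(X)$; otherwise $\iota^\ast\alpha$ could a priori have a nonzero component in $\CH^2_{(0)}(X)$ and no conclusion follows. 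You justify stability by asserting $\CH^2_{(2)}(X)=\CH^2_{hom}(X)$, ``since $\CH^2_{(0)}(X)$ injects into cohomology''. That injectivity is an open problem: the paper says explicitly, right after Corollary~\ref{cor}, that it is not known whether $\CH^2_{(0)}(X)$ injects into cohomology, and accordingly Theorem~\ref{mck} asserts $\CH^3_{(2)}(X)=\CH^3_{hom}(X)$ but makes no such claim in codimension $2$. Your fallback justifications fail for the same reason: $\CH^2_{(0)}(X)$ is not known to be generated by divisors and Chern classes (if it were, Voisin's theorem \cite{V17} for $S^{[2]}$ transported by Rie\ss's correspondence would settle the open injectivity just mentioned), and the $(\iota\times\iota)$-invariance of the Fourier kernel is only known in cohomology --- the isometry property of $\iota$ gives invariance of the Beauville--Bogomolov class, but lifting that equality to $\CH^2(X\times X)$ is essentially the content of the theorem, not something one may assume.

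The paper circumvents precisely this difficulty by running your two steps in the opposite order. It first performs the projection-formula computation one degree higher: for $\alpha\in\CH^2_{(2)}(X)$, the zero-cycle $\alpha\cdot h^2+\iota^\ast(\alpha\cdot h^2)=f^\ast\bigl((f_\ast\alpha)\cdot c_1(\OO_Y(1))^2\bigr)$ is a multiple of $h^4$ of degree zero, hence vanishes; since every class in $\CH^4_{(2)}(X)$ is of the form $\alpha\cdot h^2$ (Theorem~\ref{mck}\eqref{hard}), this proves $\iota^\ast=-\ide$ on $\CH^4_{(2)}(X)$ with no stability hypothesis. It then \emph{deduces} the stability of $\CH^2_{(2)}(X)$ by a Bloch--Srinivas argument \cite{BS}: the commutator $\iota^\ast\circ\Pi^6_X-\Pi^6_X\circ\iota^\ast$ acts as zero on zero-cycles, hence is supported on $D\times X$ for some divisor $D$; its action on $\CH^2_{(2)}(X)$, which consists of algebraically trivial cycles, factors through $\ker\bigl(AJ\colon\CH^1(\tilde D)\to\mathrm{Pic}^0(\tilde D)\bigr)=0$ because $H^3(X)=0$; and self-duality $\Pi^2_X={}^t\Pi^6_X$ converts this into the required commutation on $\CH^2(X)$. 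Some argument of this kind is what your proposal is missing. A separate, smaller error: $\iota^\ast[o,o]=[o,o]$ does not follow from ``$\iota$ maps rational curves to rational curves'', because points on rational curves in a hyperK\"ahler fourfold do not all share one class --- in $S^{[2]}$ the rational curve $\PP(T_xS)$ inside the exceptional divisor consists of points of class $[x,x]=2[x,o]-[o,o]\neq[o,o]$ for general $x$. The immediate fix is the paper's: $\CH^4_{(0)}(X)=\QQ[h^4]$ by Theorem~\ref{mck}, and $\iota^\ast h=h$.
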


	\begin{proof}
		Recall that $h=f^*c_1(O_Y(1))$\,; in particular, $h$ is $\iota$-invariant.
		Since $X$ satisfies the assumptions of Theorem~\ref{mck}, we know that 
		$\CH^4(X)$ is generated by $\CH^2_{(2)}(X)$ and by $h$. Therefore, Theorem
		\ref{main} for $\CH^4(X)$ would
		follow from  knowing
		that
		$\CH^2_{(2)}(X)$ is $\iota$-anti-invariant.
		Considering that $\CH^2_{(2)}(X)$ consists of homologically trivial cycles, the
		latter would follow from knowing that $\CH^2_{hom}(Y)=0$, where $Y$ is the EPW
		sextic. However, although it is conjectured that $\CH^2_{hom}(Y)=0$ for any
		hypersurface Y of dimension $\geq 4$, this is an intractable problem for
		Calabi--Yau hypersurfaces and for hypersurfaces of general type -- not a
		single
		case is known.
		Instead, our strategy consists in first showing that $\CH^4_{(2)}(X)$ is
		$\iota$-anti-invariant and then deduce that $\CH^2_{(2)}(X)$ is
		$\iota$-anti-invariant. 
		
		We now proceed to the proof of Theorem~\ref{main} and argue step-by-step. 
		Since $\CH^i(X) = \CH^i_{(0)}(X)$ for $i=0,1$, it is clear that $\iota$
		preserves the multiplicative grading in these cases. If $X$ is very general in
		moduli, then $\CH^1(X) = \QQ [h]$, and consequently $H^2(X) = h^\perp \oplus \QQ [h]$
		where $h^\perp$ is an irreducible polarized Hodge structure. Since  $\iota$ is
		anti-symplectic, we get that $\iota$ acts as $-\ide$ on $h^\perp$. This must
		hold for all smooth double EPW sextics. It follows that $\CH^1(X)^+ = \QQ [h]$.
		Alternatively, $\CH^1(X)^+ = f^*\CH^1(Y)$, but $\CH^1(Y) = \QQ [c_1(O_Y(1))]$ by the
		Grothendieck--Lefschetz theorem.
		
		By Theorem~\ref{mck}, $\CH^3(X) = \CH^3_{(0)}(X) \oplus \CH^3_{(2)}(X)$,
		where $\CH^3_{(2)}(X) = \CH^3_{hom}(X)$ is clearly stable under the action of
		$\iota$. In particular, $\CH^3_{(0)}(X)$ identifies with the image of the cycle
		class map $\CH^3(X) \to H^6(X)$. Therefore, by the hard Lefschetz theorem
		together with the multiplicativity of the bigrading on $\CH^*_{(*)}(X)$, we have
		$\CH^3_{(0)} (X)= h^2\cdot \CH^1(X)$. It follows readily that  $\CH^3_{(0)}(X)^+ =
		\QQ [h^3]$.

		We now deal with the remaining cases of codimension-4 and codimension-2
		cycles.
		First, note that $\CH^4_{(0)}(X)=\QQ [h^4]$ (apply Theorem~\ref{mck}), so that
		$\CH^4_{(0)}(X)$ is stable under the action of $\iota$, \emph{i.e.} we have
		$\CH^4_{(0)}(X)= \CH^4_{(0)}(X)^+$.
		Let now $\alpha\in \CH^2_{(2)}(X)$, and consider the 
		$\iota$-invariant zero-cycle $\alpha \cdot h^2+\iota^\ast(\alpha \cdot h^2)$.
		By the projection formula, we have 
		\[ \alpha \cdot h^2+\iota^\ast(\alpha \cdot h^2) = f^*\bigl((f_*\alpha)\cdot
		c_1(O_Y(1))^2\bigr)\ \ \in\ \CH^4(X).\]
		It follows readily from Remark~\ref{rmk:h}  that
		this cycle is a multiple of~$h^4$.
		Since $\alpha$ is algebraically trivial, the zero-cycles $\alpha\cdot h^2$ and
		$\iota^\ast(\alpha\cdot h^2)$
		are of degree zero, and hence
		\begin{equation}\label{zero} \alpha\cdot h^2+\iota^\ast(\alpha\cdot h^2)=0\ \
		\
		\hbox{in}\ \CH^4_{}(X)\ .\end{equation}
		As any element in $\CH^4_{(2)}(X)$ is of the form $\alpha \cdot h^2$ with
		$\alpha \in \CH^2_{(2)}(X)$  (Theorem~\ref{mck}\eqref{hard}),
		equality~(\ref{zero}) proves that $\CH^4_{(2)}(X) = \CH^4_{(2)}(X)^-$.
		
		Thus the action of $\iota$ commutes with the covariant action of $\Pi^6_X$ on
		$\CH^4(X)$. By Bloch--Srinivas~\cite{BS}, this means that 
		\begin{equation*}
		\iota^*\circ \Pi^6_X = \Pi^6_X\circ \iota^* + \Gamma \quad \hbox{in }
		\CH^4(X\times X)
		\end{equation*}
		for some correspondence $\Gamma$ supported on $D\times X$ for some divisor $D$
		in $X$. Let $\tilde{D} \to D$ be a desingularization of $D$. Since
		$\CH^2_{(2)}(X)$ consists of algebraically trivial cycles, and since
		$H^3(X) = 0$, the contravariant action $\Gamma^* : \CH^2_{(2)}(X) \to \CH^2(X)$
		factors through $\ker (AJ : \CH^1(\tilde D) \to \mathrm{Pic}^0(\tilde D))$,
		which
		is zero. Using that $\Pi^2_X$ is the transpose of $\Pi^6_X$, it follows that
		the action of $\iota$ and that of $\Pi^2_X$ commute on $\CH^2_{(2)}(X)$\,; in
		particular, the action of $\iota$ on $\CH^2(X)$ preserves $\CH^2_{(2)}(X)$.
		Equation~\eqref{zero} together with Theorem~\ref{mck}\eqref{hard} yields that
		$\alpha=-\iota^*\alpha$, proving that $\CH^2_{(2)}(X) = \CH^2_{(2)}(X)^-$.
		
		It then follows from Theorem~\ref{mck}\eqref{prod} that the action of $\iota$
		on $\CH^4(X)$ preserves $\CH^4_{(4)}(X)$ and that $\CH^4_{(4)}(X) =
		\CH^4_{(4)}(X)^+$.
		
		It remains to check that the action of $\iota$ on $\CH^2(X)$ preserves
		$\CH^2_{(0)}(X)$. Let $\alpha$ be a cycle in $\CH^2_{(0)}(X)$ and let us write
		$\iota^*\alpha = \beta_0 + \beta_2$, where $\beta_0 \in \CH^2_{(0)}(X)$ and
		$\beta_2 \in \CH^2_{(2)}(X)$. Since $h^2\cdot \CH^2_{(2j)}(X) \subseteq
		\CH^4_{(2j)}(X)$, and since  $\CH^4_{(0)}(X) = \CH^4_{(0)}(X)^+$ and $\CH^4_{(2)}(X)
		= \CH^4_{(2)}(X)^-$, we have
		$$\beta_0\cdot h^2 + \beta_2\cdot h^2 = \alpha\cdot h^2 = \iota^*(\alpha \cdot
		h^2) =  \iota^*(\beta_0\cdot h^2) + \iota^*(\beta_2\cdot h^2) =  \beta_0\cdot
		h^2 - \beta_2\cdot h^2.$$
		Consequently, $ \beta_2\cdot h^2  = 0$, and hence, thanks to
		Theorem~\ref{mck}\eqref{hard}, also $\beta_2 = 0$. Thus $\iota^*\alpha$ belongs
		to $\CH^2_{(0)}(X)$, thereby concluding the proof of the theorem.
	\end{proof}
	
	We now prove Theorem~\ref{main0} stated in the introduction\,:
	
	\begin{proof}[Proof of Theorem~\ref{main0}] Let us write $\phi\colon X^\prime\to
		X$ for the birational map from a Hilbert square $X^\prime$, and
		$\iota^\prime:=\phi^{-1}\circ\iota\circ\phi\in\bir(X^\prime)$ for the birational automorphism induced by
		$\iota\in\aut(X)$.
		
		To prove Theorem~\ref{main0} for $X^\prime$, it will suffice, thanks to the explicit generators for
		$ \CH^4_{(2j)}(X^\prime)$
		given in Theorem~\ref{mck}, to prove that
		\begin{equation}\label{+-}  (\iota^\prime)_\ast =(-1)^{j}\, \ide\colon\ \ \CH^4_{(2j)}(X^\prime)\ \to\
		\CH^4(X^\prime)\ .\end{equation}

		Let $R_\phi$ be Rie\ss's correspondence from Theorem~\ref{rie}\,; it induces,
		thanks to Remark~\ref{birat}, an isomorphism of bigraded
		rings  $(R_\phi)_\ast\colon\CH^\ast_{(\ast)}(X^\prime)
		\stackrel{\simeq}{\longrightarrow}	\CH^\ast_{(\ast)}(X)$.
		We claim that there is a commutative diagram
		\[ \xymatrix{
			\CH^4(X^\prime) \ar[r]^{(R_\phi)_\ast} \ar[d]^{\iota^\prime_\ast} &
			\CH^4(X) \ar[d]^{\iota_\ast} \\
			\CH^4(X^\prime) \ar[r]^{(R_\phi)_\ast}& \CH^4(X).\\
		}\]
		Clearly this claim, combined with Theorem~\ref{main}, establishes equality (\ref{+-}).
		To prove the claim, we use Theorem~\ref{rie}$(\rom3)$.
		We note that any zero-cycle $b\in \CH^4(X^\prime)$ can be
		represented by a cycle $\beta$ supported on an open $U^\prime \subseteq X'$ such
		that the birational map $\phi$ restricts to an
		isomorphism $\phi\vert_{U^\prime}\colon
		U^\prime\stackrel{\simeq}{\longrightarrow} U$ and $U\subseteq X$ is
		$\iota$-stable. Because $(R_\phi)_\ast$ and 
		$(\bar{\Gamma}_{\phi})_\ast$ coincide on $0$-cycles (Theorem~\ref{rie}$(\rom3)$),
		the image $(R_\phi)_\ast b\in \CH^4(Z)$ is supported on
		$(\phi\vert_{U^\prime})(\mathrm{Supp}(\beta)) \subset U$, which gives
		the claimed commutativity
		  \[    \iota_\ast (R_\phi)_\ast(b) =  (R_\phi)_\ast  (\iota^\prime)_\ast (b)\ \  \hbox{in}\ \CH^4(X)\ .\]
		\end{proof}

	\subsection{A concise reformulation of Theorem~\ref{main0}}
	In order to restate
	Theorem~\ref{main0} in a concise way, we invoke the following result\,:

	\begin{theorem} [Debarre--Macr\`i \cite{DM}]\label{dm}
		Let $S$ be a polarized
		K3 surface of degree $d$ and Picard number $1$, and let $X:=S^{[2]}$. Then
		$\bir(X)$ is trivial except in the following cases\,:
		
		\begin{itemize}
			\item $d=2$, or $d>2$ and $d$ 
			verifies 
			\[(\ast) \quad  a^2 d=2n^2+2\ ,\ \ \ a,n\ \in\Z\ ,\]		 
			while the Pell equation
			\[ \PPP_{2d}(5)\colon\ \ \  n^2-2d a^2=5 \]
			has no solution. In this case, $\aut(X)=\bir(X)=\Z/2\Z$.
			
			\item $d=10$, or $d$ is not divisible by $10$ and $d$ verifies
			$(\ast)$ and the Pell equation
			$\PPP_{2d}(5)$
			is solvable. In this case, $\aut(X)=0$ and $\bir(X)=\Z/2\Z$.
		\end{itemize}
		
		Moreover, if $\bir(X)$ is non-trivial and $d\not\in\{2,4,10\}$, $X$ is
		birational to a double EPW sextic.
	\end{theorem}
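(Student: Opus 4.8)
The plan is to translate everything into the lattice theory of $H^2(X,\Z)$ with its Beauville--Bogomolov form, via the global Torelli theorem of Verbitsky and Markman's determination of the monodromy group $\mathrm{Mon}^2(X)$ of $K3^{[2]}$-type fourfolds. Since $\pic(S)=\Z\cdot H$ with $H^2=d$, the Néron--Severi lattice of $X=S^{[2]}$ is the rank-$2$ lattice
$$\operatorname{NS}(X)=\Z\tilde H\oplus\Z\delta,\qquad \tilde H^2=d,\ \ \delta^2=-2,\ \ \tilde H\cdot\delta=0,$$
where $2\delta$ is the exceptional class, and the transcendental lattice $T(X)$ is Hodge-isometric to $T(S)$, of rank $21$ and (for very general $S$ of degree $d$) carrying an irreducible Hodge structure. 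By global Torelli, $\bir(X)$ is identified with the group of monodromy Hodge isometries preserving the movable cone $\overline{\mathrm{Mov}}(X)$, and $\aut(X)$ with the subgroup additionally preserving the ample cone. A Hodge isometry acts on the irreducible $T(X)$ as $\pm\ide$. The symplectic case ($+\ide$ on $T(X)$) would give a finite symplectic birational automorphism whose coinvariant lattice is a negative-definite sublattice of the rank-$2$ lattice $\operatorname{NS}(X)$, hence of rank $\leq 1$, contradicting Mongardi's classification (all nontrivial symplectic automorphisms of $K3^{[2]}$-type have coinvariant lattice of rank $\geq 8$). Hence any nontrivial element of $\bir(X)$ is anti-symplectic, acting as $-\ide$ on $T(X)$.

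Such an involution $g$ therefore restricts to an involution $\psi$ of $\operatorname{NS}(X)$, and the glueing condition across the discriminant forms $A_{\operatorname{NS}(X)}\cong A_{T(X)}$ forces $\psi$ to act as $-\ide$ on $A_{\operatorname{NS}(X)}$. I would show that, apart from small $d$, the invariant part $\operatorname{NS}(X)^{\psi}$ is a rank-$1$ lattice spanned by a polarization $h$ with $h^2=2$, so that $g=-R_h$ is minus the reflection fixing $h^{\perp}$\,; this is automatically integral because $h^2=2$, and one checks it lies in $\mathrm{Mon}^2(X)$ by verifying that it preserves the orientation of the positive cone. Writing $h=a\tilde H+n\delta$, the equation $h^2=2$ reads $a^2d-2n^2=2$, that is precisely $(\ast)\colon a^2d=2n^2+2$. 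Conversely, when $(\ast)$ fails there is no square-$2$ class, hence no anti-symplectic involution, and $\bir(X)$ is trivial. This accounts for the dichotomy between solvable and unsolvable $(\ast)$.

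The delicate point — and the step I expect to be the main obstacle — is to decide, when $g=-R_h$ exists, whether it is induced by a \emph{biregular} automorphism or only a strictly birational one\,; equivalently, whether $\psi$ preserves the ample cone or merely the movable cone. Since $\rho(X)=2$, the positive cone of $\operatorname{NS}(X)_{\RR}$ is a two-dimensional hyperbolic sector, $g$ acts on it as the hyperbolic reflection fixing the ray $\RR_{>0}h$, and $g$ preserves a subsector exactly when $h$ bisects it. The walls subdividing the positive cone into movable and ample chambers are governed by the MBM classes of $K3^{[2]}$-type (Hassett--Tschinkel, Mongardi, Bayer--Macr\`i), which have square $-2$ and divisibility $1$, or square $-10$ and divisibility $2$. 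The presence of a wall of the latter type separating the ample cone from the movable cone is what prevents $g$ from preserving the ample cone\,; the careful discriminant computation — writing such a class $\kappa=x\tilde H+y\delta$ with $\kappa^2=-10$ and $\mathrm{div}(\kappa)=2$ and reducing to a primitive solution — produces the Pell equation $\PPP_{2d}(5)\colon n^2-2da^2=5$, the constant $5$ being $-\tfrac{1}{2}\kappa^2$. Thus when $\PPP_{2d}(5)$ is unsolvable there is no such wall, $h$ bisects the (then maximal) Kähler chamber, $g$ is biregular, and $\aut(X)=\bir(X)=\Z/2\Z$\,; when it is solvable the wall obstructs regularity while $g$ still preserves $\mathrm{Mov}(X)$, giving $\aut(X)=0$ and $\bir(X)=\Z/2\Z$. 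The exceptional degrees $d=2,10$ I would treat directly, $d=2$ being the double EPW sextic itself with its regular anti-symplectic involution.

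Finally, for the ``moreover'' assertion I would invoke the characterization of $K3^{[2]}$-type fourfolds birational to a double EPW sextic as precisely those carrying, inside their movable cone, a class of square $2$ and divisibility $1$ (O'Grady\,; Iliev--Kapustka--Kapustka--Ranestad). Under $(\ast)$ the class $h$ constructed above is a square-$2$ class\,; verifying that it has divisibility $1$ and lies in the interior of the movable cone exhibits $X$ as birational to a double EPW sextic. The three degrees $d\in\{2,4,10\}$ are exactly those for which this verification fails — because $h$ has divisibility $2$, or because $h$ lands on a wall so that $X$ is instead birational to another explicit model — which is why they must be excluded.
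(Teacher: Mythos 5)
The paper does not actually prove this statement: its ``proof'' consists of the citation \cite[Proposition B.3]{DM}, together with the remark that this builds on \cite[Theorem 1.1]{BC}, where the $\aut(X)$ part was established. Your proposal is therefore by construction a different route --- you reconstruct the proof of the cited result rather than cite it --- and, for the two bullet-point classifications, your reconstruction is essentially the argument of Debarre--Macr\`i and Boissi\`ere \emph{et alii}: Markman's Torelli theorem (for K3$^{[2]}$-type the monodromy group is all of $O^+$ of the lattice, since the discriminant group is $\Z/2\Z$), exclusion of symplectic elements by comparing the rank of the coinvariant lattice inside the rank-two $\operatorname{NS}(X)$ with Mongardi's lower bound (one should add that any non-trivial element is then an anti-symplectic \emph{involution}, since its square is symplectic, and that two such elements differ by a symplectic one, whence $\bir(X)\subseteq \Z/2\Z$), identification of the involution with $-R_h$ for a square-$2$ class $h=a\tilde H+n\delta$ --- whose existence is exactly condition $(\ast)$ --- and the regularity criterion via the Bayer--Macr\`i/Mongardi cone description, where a square $-10$, divisibility-$2$ class $\kappa=2a\tilde H+n\delta$ (existence $\Leftrightarrow$ solvability of $\PPP_{2d}(5)$, since $\kappa^2=-10$ reads $n^2-2da^2=5$) subdivides the movable cone into several nef chambers and forces $\aut(X)=0$. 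All of this is correct in outline; what remains at sketch level is the bookkeeping the reference carries out (placing $h$ inside $\operatorname{Mov}(X)$ using the exceptional reflections, divisibility computations through the glue between $\operatorname{NS}(X)$ and the transcendental lattice, faithfulness of the action of $\bir(X)$ on $H^2$).

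Where your proposal has a genuine gap is the ``moreover'' clause. The characterization you invoke --- birational to a double EPW sextic $\Leftrightarrow$ existence of a square-$2$, divisibility-$1$ class in the interior of the movable cone --- is false, and $d=4$ is exactly a counterexample: there $h=\tilde H-\delta$ is \emph{ample} of square $2$ (and divisibility $1$ is automatic, see below), yet $S^{[2]}$ is not birational to a double EPW sextic, because the morphism defined by $h$ is the $6\!:\!1$ map $[x,y]\mapsto \langle x,y\rangle$ onto the Grassmannian quadric $\mathrm{Gr}(2,4)\subset\PP^5$, not a double cover of a sextic. The correct criterion (O'Grady, and \cite[Section 7]{DM}) must in addition exclude the Heegner divisors where the EPW construction degenerates; these are detected by classes in $h^\perp\cap\operatorname{NS}(X)$ of square $-2$ ($d=2$: $\kappa=\delta$), square $-4$ ($d=4$: $\kappa=\tilde H-2\delta$) and square $-10$, divisibility $2$ ($d=10$: $\kappa=2\tilde H-5\delta$), which is precisely why $d\in\{2,4,10\}$ are excluded from that clause. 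Relatedly, two of your stated reasons for the exceptional degrees are wrong: a square-$2$ class can never have divisibility $2$ (a divisibility-$2$ class in the K3$^{[2]}$ lattice has square $\equiv 6 \pmod 8$), and for $d=2$ the fourfold $S^{[2]}$ is \emph{not} ``the double EPW sextic itself'' --- there $h=\tilde H$ is nef but not ample (it is orthogonal to $\delta$), the associated map is $4\!:\!1$ onto the secant cubic of the Veronese surface in $\PP^5$, and the involution is instead induced functorially by the covering involution of $S\to\PP^2$. None of this affects your treatment of the two bullet points, but the last paragraph of your proposal, as written, would not establish the ``moreover'' statement.
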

	
	\begin{proof} 
		This is \cite[Proposition B.3]{DM}. This extends and builds on
		prior work of Boissi\`ere \emph{et alii} \cite[Theorem 1.1]{BC}, who had proven the
		result for $\aut(X)$.   
	\end{proof}
	
	\begin{remark} 
		Double EPW sextics isomorphic to the Hilbert square of a K3 surface can be
		explicitly described \cite[Corollary 7.6]{DM} and are dense (for the euclidean
		topology) in the moduli space of double EPW sextics \cite[Proposition
		7.9]{DM}.
		Double EPW sextics birational to 	
		the Hilbert square of a K3 surface are explicitly described by Pertusi
		\cite[Theorem 1.4]{Per} (this completes earlier work of Iliev--Madonna
		\cite{IMad}). 
	\end{remark}

	\begin{corollary}\label{cor0}
		Let $X$ be a Hilbert scheme $X=S^{[2]}$ where
		$S$ is a K3 surface with $\pic(S)=\Z$. Let $\iota\in\bir(X)$ be a non-trivial
		birational automorphism. Then ($\iota$ is a non-symplectic birational
		involution, and) $\iota$ acts on $\CH^4(X)$ as in Theorem~\ref{main0}.
	\end{corollary}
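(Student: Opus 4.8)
The plan is to deduce the corollary from Theorem~\ref{main0} and the classification of Theorem~\ref{dm}, isolating the three exceptional degrees $d\in\{2,4,10\}$ for separate treatment. First, the hypothesis $\pic(S)=\Z$ together with Theorem~\ref{dm} forces $\bir(X)=\Z/2\Z$ and pins its nontrivial generator $\iota$ down as a non-symplectic birational involution, which settles the parenthetical assertion. Next, unwinding the explicit descriptions $\CH^4_{(0)}(X)=\QQ[o,o]$, $\CH^4_{(2)}(X)=\langle [x,o]-[y,o]\rangle$ and $\CH^4_{(4)}(X)=\langle [x,y]-[x,o]-[y,o]+[o,o]\rangle$ from Theorem~\ref{mck}, a direct computation shows that the asserted formula $\iota^*[x,y]=[x,y]-2[x,o]-2[y,o]+4[o,o]$ is equivalent to the eigenvalue statement
\[ \iota^\ast=(-1)^j\,\ide\colon\ \CH^4_{(2j)}(X)\longrightarrow\CH^4_{(2j)}(X),\qquad j=0,1,2. \]
I would prove the formula for generic $d$ directly and this eigenvalue form for the exceptional degrees.

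For the generic degrees $d\notin\{2,4,10\}$, this is an immediate consequence of Theorem~\ref{main0}. By its last assertion, Theorem~\ref{dm} guarantees that $X=S^{[2]}$ is birational to a double EPW sextic $X^\prime$\,; the anti-symplectic involution of $X^\prime$ induces, through this birational equivalence, a nontrivial birational involution of $X$, which must coincide with $\iota$ because $\bir(X)=\Z/2\Z$ has a unique nontrivial element. Since Theorem~\ref{main0} already records its conclusion as the displayed formula for this induced action on $\CH^4(S^{[2]})$, the case $d\notin\{2,4,10\}$ is done.

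It remains to handle the three exceptional degrees, and this is where the actual work lies, since for $d\in\{2,4,10\}$ the variety $S^{[2]}$ need not be birational to any double EPW sextic, so that the mechanism of Theorem~\ref{main} (which passes through the Calabi--Yau hypersurface structure of the EPW sextic \emph{via} Remark~\ref{rmk:h}) is not available. For these finitely many cases I would instead exploit the explicit geometry of $\iota$. When $d=2$, the involution is the natural one $\iota=\sigma^{[2]}$ induced by the anti-symplectic covering involution $\sigma$ of the double plane $S\to\PP^2$\,; here the relation $\pi^\ast\pi_\ast=\ide+\sigma^\ast$ for the double cover $\pi$, combined with $\CH^2_{hom}(\PP^2)=0$, forces $\sigma^\ast=-\ide$ on $\CH^2_{hom}(S)$, and under the natural isomorphism $\CH^2_{(2)}(X)\cong\CH^2_{hom}(S)$ (equivariant for $\sigma^{[2]}$ and $\sigma$) this gives $\iota^\ast=-\ide$ on $\CH^2_{(2)}(X)$\,; the remaining eigenvalues then follow from the multiplicativity of the Fourier decomposition recorded in Theorem~\ref{mck}, exactly as in the closing paragraphs of the proof of Theorem~\ref{main}. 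When $d=4$, the involution is Beauville's involution on $S^{[2]}$ for $S$ a quartic surface, sending $\{p,q\}$ to the residual pair $\{r,s\}$ cut out on $S$ by the line through $p$ and $q$\,; one computes $\iota^\ast$ on $\CH^4(X)$ directly, using that any line section $p+q+r+s$ is rationally equivalent to $4o$ on $S$. The degree $d=10$ is treated by the analogous explicit description of its birational involution.

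The main obstacle is precisely these exceptional degrees. Their involutions have invariant lattices genuinely different from that of a double EPW sextic -- for instance, for $d=2$ the invariant lattice of $\sigma^{[2]}$ is $\langle 2\rangle\oplus\langle -2\rangle$ of rank $2$, whereas a double EPW sextic has rank-one invariant lattice $\langle 2\rangle$ -- so no deformation argument connects these pairs $(X,\iota)$ to the generic family, and each degree demands its own Chow-theoretic computation reducing the action on $\CH^2_{(2)}(X)$ to a Bloch-type identity on the zero-cycles of the underlying K3 surface.
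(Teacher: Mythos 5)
Your overall strategy is the same as the paper's: use Theorem~\ref{dm} to identify $\iota$ as the unique non-symplectic birational involution, observe that the formula of Theorem~\ref{main0} is equivalent to the eigenvalue statement $\iota^\ast=(-1)^j\ide$ on $\CH^4_{(2j)}(X)$, settle $d\notin\{2,4,10\}$ by Theorem~\ref{main0} (since $X$ is then birational to a double EPW sextic whose covering involution induces $\iota$), and treat the exceptional degrees separately. Your $d=2$ argument is correct and is exactly the ``elementary'' verification the paper alludes to: $\pi^\ast\pi_\ast=\ide+\sigma^\ast$ together with $\CH^2_{hom}(\PP^2)=0$ gives $\sigma^\ast=-\ide$ on $\CH_0(S)_{hom}$, hence $\iota^\ast(S_x-S_y)=S_{\sigma(x)}-S_{\sigma(y)}=-(S_x-S_y)$, and since $[x,y]=S_x\cdot S_y$ this propagates to all of $\CH^4(X)$.

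The genuine gap is in your treatment of $d=4$ and $d=10$. For the Beauville involution ($d=4$), the relation $p+q+r+s\sim 4o$ in $\CH_0(S)$ determines only $[r]+[s]=4o-[p]-[q]$, and since $[x,o]=Z_\ast[x]\cdot S_o$ factors through $\CH_0(S)$, this controls only the $(0)$- and $(2)$-components of $\iota^\ast[p,q]$. It does \emph{not} determine the $(4)$-component: the class of $[r,s]$ in $\CH^4(S^{[2]})$ is not a function of $[r]+[s]\in\CH_0(S)$ alone, and distinguishing such classes is precisely what the splitting $\CH^4_{(2)}\oplus\CH^4_{(4)}$ encodes. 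Equivalently, your computation says nothing about $\iota^\ast$ on $\CH^2_{(2)}(X)$, which is the actual crux (by Theorem~\ref{mck}\eqref{hard} and \eqref{prod} everything follows from it). That statement for the Beauville involution is of Bloch-conjecture type; the paper does not prove it ``directly'' but cites \cite[Corollary 1.8]{FLV} or \cite{Lat}, whose proofs rely on spread and Franchetta-type arguments over the family of quartic surfaces. Similarly, for $d=10$ your ``analogous explicit description'' is not an argument: the paper quotes the separate (and substantial) paper \cite{epw} for the fact that the O'Grady involution acts as $-\ide$ on $\CH^4_{(2)}(X)$, and even then must rerun the Bloch--Srinivas mechanism from the proof of Theorem~\ref{main} to pin down the action on the remaining graded pieces. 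So your proposal reproduces the paper's architecture, but as written the two hardest exceptional cases remain unproven.
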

	
	\begin{proof}
		Theorem~\ref{dm} implies that
		$\iota$ is the unique non-symplectic birational involution. In case the degree
		of
		$S$ is $2$, $\iota$ must be the automorphism induced by the covering
		involution
		$S\to\PP^2$, and it is elementary to prove that $\iota$ acts on $\CH^4(X)$ as
		requested. In case the degree of $S$ is $4$, $\iota$ must be the
		famous Beauville involution \cite{Beau0}, for which Theorem~\ref{main} (and
		hence Theorem~\ref{main0}) is known to hold (\cite[Corollary 1.8]{FLV} or
		\cite{Lat}). In case the degree of $S$ is $10$, $\iota$ must be the O'Grady
		involution \cite{OG2}.
		For this case, it was proven in \cite{epw} that $\iota$ acts as $-\ide$ on
		$\CH^4_{(2)}(X)$. Reasoning as in the proof of Theorem~\ref{main}, this proves
		that $\iota$ acts on $\CH^4(X)$ as in Theorem~\ref{main0} (a simpler, more
		geometric argument for the case of the O'Grady involution is given by Lin
		\cite{Lin}).
		Finally, in case the degree of $S$ is $\ge 6$ and not equal to $10$, $X$ must
		be birational to a double EPW sextic $X^\prime$ and $\iota$ is induced by the
		covering
		involution $\iota^\prime$ of the double EPW sextic (Theorem~\ref{dm}).
		It follows that $\iota$ acts on $\CH^4_{(j)}(X)$ as in Theorem~\ref{main0}.
	\end{proof}

	\begin{remark}\label{R:eq}
		We observe that Corollary~\ref{cor0} in turn implies Theorem~\ref{main0}.
		Indeed, assume $X$ is a double EPW sextic
		birational to a Hilbert square $S^{[2]}$.
		Let $d$ be the degree of $S$. Since "being birational to a Hilbert square" can
		be translated, \emph{via} lattice-theory, into a numerical condition on double
		EPW sextics (\emph{cf.} \cite{Per}), all Hilbert squares of degree-$d$ K3 surfaces are
		birational to a double EPW sextic (and $d$ satisfies the numerical condition of
		Theorem~\ref{dm}). Let $\XX_d\to \FF_d$ denote the universal family of Hilbert
		squares of degree-$d$ K3 surfaces. Corollary~\ref{cor0} applies to the very
		general element of this family. But then a standard spread argument (\cite[Lemma
		3.2]{Vo}), plus the fact that the graph of the covering involution and the MCK
		decomposition are universally defined, implies that Theorem~\ref{main} (and
		hence Theorem~\ref{main0}) holds for all elements of $\XX_d\to\FF_d$.	
	\end{remark}

	\subsection{Some applications of Theorem~\ref{main0}} \label{S:applications}
	We provide two corollaries to Theorem~\ref{main0}, which conjecturally should
	hold 
	for all double EPW sextics.
	First, when $X$ is birational to the Hilbert square of a K3 surface, we can
	improve Theorem~\ref{thm:ferretti} to the case of codimension-3 cycles.

	\begin{corollary}\label{cor} 	Let $X$ be a double EPW sextic, and assume either
		that $\CH^1(X) = \QQ [h]$, or that $X$ is birational to the Hilbert square of a K3
		surface. 
		Consider the subring
		\[ \operatorname{R}^\ast(X):= \langle \CH^1(X), \CH^2(X)^+, c_j(X)\rangle\ \ \
		\subset\
		\CH^\ast(X)\
		.\]
		The cycle class map $\operatorname{R}^i(X)\to H^{2i}(X)$ is injective for
		$i\ge 3$.
	\end{corollary}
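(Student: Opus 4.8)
The plan is to separate the two codimensions $i=3$ and $i=4$. Codimension $4$ is already settled for every double EPW sextic by Theorem~\ref{thm:ferretti}, so only codimension $3$ remains. In the first case, where $\CH^1(X)=\QQ[h]$, the primitive part $\CH^1(X)^-$ vanishes and the subring $\operatorname{R}^\ast(X)$ coincides with the subring $\langle h,\CH^2(X)^+,c_j(X)\rangle$ appearing in Theorem~\ref{thm:ferretti2}\,; injectivity in codimension $3$ is then immediate from that theorem. The genuine content is therefore the second case, where $X$ is birational to a Hilbert square $S^{[2]}$ and the Fourier/MCK package of Theorems~\ref{mck} and~\ref{main} becomes available.

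For that case, I would prove the stronger statement that $\operatorname{R}^3(X)$ is entirely contained in the single graded piece $\CH^3_{(0)}(X)$. The ring $\operatorname{R}^\ast(X)$ is generated by $\CH^1(X)$, by $\CH^2(X)^+$ and by the Chern classes, so in codimension $3$ it is spanned by triple products $D_1\cdot D_2\cdot D_3$ with $D_k\in\CH^1(X)$ and by products $D\cdot\alpha$ with $D\in\CH^1(X)$ and $\alpha\in\CH^2(X)^+$ (recall that $c_1(X)=c_3(X)=0$ on a hyperK\"ahler fourfold). By Theorem~\ref{mck} one has $\CH^1(X)=\CH^1_{(0)}(X)$, while $c_j(X)\in\CH^j_{(0)}(X)$ by Theorem~\ref{mck}\eqref{chern}, so by multiplicativity of the MCK decomposition each triple product lands in $\CH^3_{(0)}(X)$. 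It then suffices to check that $\CH^2(X)^+\subseteq\CH^2_{(0)}(X)$, for then $D\cdot\alpha\in\CH^1_{(0)}(X)\cdot\CH^2_{(0)}(X)\subseteq\CH^3_{(0)}(X)$ as well.

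The inclusion $\CH^2(X)^+\subseteq\CH^2_{(0)}(X)$ is where Theorem~\ref{main} enters, and it is the crux of the argument. Writing an $\iota$-invariant class $\alpha\in\CH^2(X)^+$ as $\alpha=\alpha_0+\alpha_2$ along the $\iota$-stable splitting $\CH^2(X)=\CH^2_{(0)}(X)\oplus\CH^2_{(2)}(X)$, invariance of $\alpha$ forces $\iota^\ast\alpha_2=\alpha_2$\,; but $\iota^\ast=-\ide$ on $\CH^2_{(2)}(X)$ by Theorem~\ref{main}, whence $\alpha_2=0$. This gives $\operatorname{R}^3(X)\subseteq\CH^3_{(0)}(X)$, and since $\CH^3(X)=\CH^3_{(0)}(X)\oplus\CH^3_{(2)}(X)$ with $\CH^3_{(2)}(X)=\CH^3_{hom}(X)$ (Theorem~\ref{mck}), the piece $\CH^3_{(0)}(X)$ maps injectively to $H^6(X)$. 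Hence the cycle class map is injective on $\operatorname{R}^3(X)$, as desired.

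I do not expect a real obstacle beyond clean bookkeeping of the generators\,: once $\CH^2(X)^+\subseteq\CH^2_{(0)}(X)$ is extracted from Theorem~\ref{main}, multiplicativity of the bigrading does everything automatically. In fact the delicate missing relation~\eqref{eq:missing} then falls out for free in this case, since $\CH^3_{(0)}(X)=h^2\cdot\CH^1(X)$ and the $\iota$-anti-invariant part of the latter is precisely $h^2\cdot\CH^1(X)^-$.
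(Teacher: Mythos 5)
Your proposal is correct and takes essentially the same route as the paper: the first case is dispatched by Theorem~\ref{thm:ferretti2}, and in the Hilbert-square case the paper reduces, exactly as you do, to the inclusion $\CH^2(X)^+\subseteq\CH^2_{(0)}(X)$, deduced from Theorem~\ref{main} via the same eigenspace argument ($\CH^2_{(0)}(X)$ is stable under $\iota^*$ and $\iota^*=-\ide$ on $\CH^2_{(2)}(X)$); your multiplicativity bookkeeping only makes explicit what the paper compresses into ``it suffices to show, due to Theorem~\ref{mck}''. Your closing remark that the missing relation~\eqref{eq:missing} holds when $X$ is birational to a Hilbert square is also correct, and does not contradict the paper, where \eqref{eq:missing} is left open only for arbitrary double EPW sextics.
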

	
	\begin{proof}
		The case where $\CH^1(X) = \QQ [h]$ is the content of
		Theorem~\ref{thm:ferretti2}.
		In the case $X$ is assumed to be birational to the Hilbert square of a K3
		surface,	 it suffices to show, due to Theorem~\ref{mck}, that $\CH^2 (X)^+\subset
		\CH^2_{(0)}(X)$.
		Since $\CH^2(X) = \CH^2_{(0)}(X) \oplus \CH^2_{(2)}(X)$, this follows from the
		facts proven in Theorem~\ref{main} that $\iota^*$ acts as
		$-1$ on $\CH^2_{(2)}(X)$ and that
		$\CH^2_{(0)}(X)$ is stable under the action of $\iota^*$.
	\end{proof}
	
	We note that	Corollary~\ref{cor} should hold for all $i$ (the problem in
	proving this is
	that
	it is not known whether $\CH^2_{(0)}(X)$ injects into cohomology) and for all
	double EPW sextics. 
	Second, still when $X$ is birational to the Hilbert square of a K3 surface or when
	$X$ is generic, we can characterize the canonical zero-cycle on $X$ as being the
	class of any point lying on a uniruled divisor whose class is $\iota$-invariant.
	\begin{corollary}\label{cor2}
		Let $X$ be a double EPW sextic, and assume either that $\CH^1(X) = \QQ [h]$, or
		that $X$ is birational to the Hilbert square of a K3 surface. Let $S_1
		\CH^4(X)\subset \CH^4(X)$
		denote the subgroup generated by points on uniruled divisors in $X$.  
		Then
		\[ S_1 \CH^4(X)\cap \CH^4(X)^+=\QQ[c_4(X)]\ .\]
	\end{corollary}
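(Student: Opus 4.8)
The plan is to identify the canonical zero-cycle with $[c_4(X)]$ and then show that any point $x$ lying on a uniruled divisor whose class is $\iota$-invariant represents this same class. First I would recall that $S_1\CH^4(X)$, the subgroup generated by points lying on uniruled divisors, is a natural filtration step whose image under any MCK (or Fourier) decomposition is expected to be $\CH^4_{(0)}(X)\oplus\CH^4_{(2)}(X)$; indeed, points on rational curves lie in $\CH^4_{(0)}(X)$, and points on uniruled divisors are supported on the $(0)$- and $(2)$-pieces by the general constant-cycle/incidence philosophy. In the Hilbert-square case this is made precise by Theorem~\ref{mck}, where $\CH^4_{(0)}(X)=\QQ[o,o]=\QQ[h^4]$ and $\CH^4_{(2)}(X)=\langle[x,o]-[y,o]\rangle$. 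So I would begin by showing the inclusion $S_1\CH^4(X)\subseteq \CH^4_{(0)}(X)\oplus\CH^4_{(2)}(X)$.

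Next I would intersect $S_1\CH^4(X)$ with the $\iota$-invariant part. By Theorem~\ref{main}, we have $\iota^*=\ide$ on $\CH^4_{(0)}(X)$ but $\iota^*=-\ide$ on $\CH^4_{(2)}(X)$. Hence
\[
\bigl(\CH^4_{(0)}(X)\oplus\CH^4_{(2)}(X)\bigr)^+=\CH^4_{(0)}(X)=\QQ[h^4]=\QQ[c_4(X)],
\]
where the last identification uses that $c_4(X)\in\CH^4_{(0)}(X)$ (Theorem~\ref{mck}\eqref{chern}) and that $\CH^4_{(0)}(X)$ is one-dimensional, together with $\deg c_4(X)=\chi_{\mathrm{top}}(X)\neq 0$ so that $c_4(X)$ spans it. Combining with the inclusion from the first step gives $S_1\CH^4(X)\cap\CH^4(X)^+\subseteq\QQ[c_4(X)]$. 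For the reverse inclusion, I would exhibit a single uniruled divisor with $\iota$-invariant class carrying a point representing $[c_4(X)]$: the Lagrangian fixed surface $Z$ is $\iota$-invariant, and a uniruled divisor dominating $Z$ (or the class $h^3$, which is represented by points on rational curves and is manifestly $\iota$-invariant) does the job. Since $\Bigl(\CH^1(X)^+\Bigr)^{\cdot 4}=\QQ[h^4]$ is nonzero and $\iota$-invariant, $\QQ[c_4(X)]\subseteq S_1\CH^4(X)\cap\CH^4(X)^+$.

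In the case $\CH^1(X)=\QQ[h]$, rather than via the Fourier decomposition, I would argue directly from Theorem~\ref{thm:ferretti} and its consequences. Here the key is that for a double EPW sextic the class of any point on a uniruled divisor is controlled by the EPW-sextic geometry: a uniruled divisor $D$ with $\iota$-invariant class pushes forward along $f\colon X\to Y$ to a divisor on $Y$, and a point on a rational curve sweeping out $D$ maps to a zero-cycle on $Y$ that, by Theorem~\ref{cy} and Remark~\ref{rmk:h}, is proportional to $h^n$; pulling back shows the point is a multiple of $h^4$. Conversely $h^4=(f^*c_1(\OO_Y(1)))^4$ is represented by points on the rational curves that always exist on such hypersurfaces and is $\iota$-invariant, giving the equality.

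The hard part will be making the inclusion $S_1\CH^4(X)\subseteq\CH^4_{(0)}(X)\oplus\CH^4_{(2)}(X)$ rigorous, i.e. showing that the class of a point on a uniruled divisor has vanishing $\CH^4_{(4)}$-component. This requires knowing that points on uniruled divisors land in the correct piece of the Fourier filtration, which is essentially the statement that the filtration $S_\bullet$ by dimension of support is compatible with the MCK grading; I would invoke the explicit generators of Theorem~\ref{mck} together with the fact that a rational curve contracts the $(4)$-part (as the $(4)$-part is a birational invariant detecting genuinely two-dimensional families of zero-cycles), reducing the claim to the well-understood behaviour of the Beauville--Voisin cycle $o$ on rational curves of $S$.
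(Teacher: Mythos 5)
Your proposal reproduces the skeleton of the paper's argument in the Hilbert-square case (identify $S_1\CH^4(X)$ with $\CH^4_{(0)}(X)\oplus\CH^4_{(2)}(X)$, then take $\iota$-invariants using Theorem~\ref{main}), but the step you yourself flag as ``the hard part'' is exactly the missing idea, and the heuristics you offer in its place (``constant-cycle/incidence philosophy'', ``a rational curve contracts the $(4)$-part'') are not a proof. The paper's proof rests on an external input that you never invoke: the theorem of Charles--Mongardi--Pacienza \cite[Theorem~1.6 and Remark~1.2]{CP}, which gives $S_1\CH^4(X)=\CH^1(X)\cdot\CH^3(X)$. With this equality, the identification $S_1\CH^4(X)=\CH^4_{(0)}(X)\oplus\CH^4_{(2)}(X)$ is a direct consequence of the multiplicative bigrading of Theorem~\ref{mck} (using part~(ii) there for the reverse inclusion), and in the case $\CH^1(X)=\QQ[h]$ one gets at once $S_1\CH^4(X)\cap\CH^4(X)^+=h\cdot\CH^3(X)^+$, which is $\QQ[h^4]=\QQ[c_4(X)]$ by Remark~\ref{rmk:h}. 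The citation is also what makes the inclusion $\QQ[c_4(X)]\subseteq S_1\CH^4(X)$ available at all: your appeal to ``rational curves that always exist on such hypersurfaces'' produces rational curves on $Y$ (or curves in $X$), not uniruled \emph{divisors} in $X$, and without an existence statement for uniruled divisors (which is part of what \cite{CP} supplies) it is not even clear that $S_1\CH^4(X)\neq 0$.

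Your direct argument in the case $\CH^1(X)=\QQ[h]$ also contains an error. For a point $x$ on a rational curve in $X$, the pushforward $f_*[x]=[f(x)]$ is indeed proportional to $h_Y^4$: any point of a rational curve $C\subset Y$ has class $\frac{1}{\deg C}\,h_Y\cdot[C]$, and Remark~\ref{rmk:h} applies since $Y$ is a quotient variety. But pulling back yields $f^*f_*[x]=[x]+\iota^*[x]$, not $2[x]$; so what you actually obtain is that $[x]+\iota^*[x]$ is a multiple of $h^4$, and you cannot conclude that $[x]$ itself is. That stronger claim is false in general: in the Hilbert-square case the points $[x,x]=2[x,o]-[o,o]$ lie on the (uniruled) exceptional divisor and are not multiples of $h^4$. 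Relatedly, restricting attention to uniruled divisors ``with $\iota$-invariant class'' is a logical slip: $S_1\CH^4(X)\cap\CH^4(X)^+$ consists of $\iota$-invariant \emph{combinations} of points on arbitrary uniruled divisors, not of points on invariant divisors. Both defects are repairable by averaging: writing $\alpha=\sum_i a_i[x_i]\in S_1\CH^4(X)\cap\CH^4(X)^+$ with each $x_i$ on a rational curve inside a uniruled divisor (possible since rational curves cover a dense open subset of such a divisor), one gets $2\alpha=\alpha+\iota^*\alpha=\sum_i a_i\,f^*f_*[x_i]\in\QQ[h^4]$ --- an argument which, pleasantly, uses neither hypothesis of the corollary and proves the forward inclusion for every double EPW sextic. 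But as written, your proof both omits the key citation \cite{CP} and asserts a false intermediate claim, so it does not stand on its own.
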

	
	\begin{proof} Let $X$ be a double EPW sextic. From \cite{Fe}, we know that
		$c_4(X)$ is a multiple of $h^4$ in $\CH^4(X)$, where $h = f^*c_1(O_Y(1))$ is the
		$\iota$-invariant polarization on $X$.
		From \cite[Theorem~1.6 and Remark 1.2]{CP}
		we have 
		\begin{equation}\label{eq:CP}
		S_1\CH^4(X) = \CH^1(X)\cdot \CH^3(X).
		\end{equation}
		First assume that $\CH^1(X) = \QQ [h]$. In that case, we have $S_1 \CH^4(X)\cap
		\CH^4(X)^+ = h\cdot \CH^3(X)^+$. The corollary in that case then follows
		immediately from Remark~\ref{rmk:h}.
		
		Second, assume that $X$ is birational to the Hilbert square of a K3 surface.
		In that case, the subgroup $S_1 \CH^4(X)$ coincides with $\CH^4_{(0)}(X)\oplus
		\CH^4_{(2)}(X)$, where $\CH^*_{(*)}(X)$ refers to the Fourier decomposition of
		Theorem~\ref{mck}. This can be seen either as a combination of \cite[End of
		Section~4.1]{V14} and the fact that $S_1\CH^4(X)$ is a birational invariant due
		to \cite{Rie} and the characterization~\eqref{eq:CP}, or as a direct consequence
		of Theorem~\ref{mck}.
		Theorem~\ref{main} then implies that the $\iota$-invariant part of $S_1
		\CH^4(X)$ is $\CH^4_{(0)}(X)\cong\QQ[h^4]$.
	\end{proof}

	\vskip1cm
	\begin{nonumberingt} 
		Thanks to IRMA Strasbourg for making possible the visit of the second author
		to Strasbourg  in February and August 2019. Thanks to Hsueh-Yung Lin, Laurent Manivel and Ulrike Rie\ss\ for helpful comments.
		\end{nonumberingt}

	\vskip1cm

\end{document}